\newtheorem{theorem}{Theorem}[section]
\newtheorem{lemma}[theorem]{Lemma}
\newtheorem{proposition}[theorem]{Proposition}
\newtheorem{corollary}[theorem]{Corollary}
\theoremstyle{definition}
\newtheorem{definition}[theorem]{Definition}
\newtheorem{example}[theorem]{Example}
\theoremstyle{remark}
\newtheorem{remark}[theorem]{Remark}
\numberwithin{equation}{section}
\DeclareMathOperator{\Cov}{Cov}
\DeclareMathOperator{\Var}{Var}
\DeclareMathOperator{\por}{por}
\DeclareMathOperator{\lpor}{\underline{por}}
\DeclareMathOperator{\upor}{\overline{por}}
\DeclareMathOperator{\diam}{diam}
\DeclareMathOperator{\card}{card}
\def\1{{\rm1\!\!1}}
\def\e{\varepsilon}
\def\N{{\rm I\! N}}
\def\R{{\rm I\! R}}
\begin{document}

\title[Porosities of Mandelbrot percolation] 
{Porosities of Mandelbrot percolation}
\author[A. Berlinkov]{Artemi Berlinkov$^1$}
\address{Department of Mathematics, Bar-Ilan University, Ramat Gan, 5290002, Israel$^1$}
\email{berlinkov@technion.ac.il$^1$}
\thanks{AB was partially supported by the Department of Mathematics at 
University of Jyv\"askyl\"a; DFG-Graduirtenkolleg ``Approximation und 
algorithmische Verfahren'' at the University of Jena; University ITMO; ISF 
grant 396/15; Center for Absorption in Science,  Ministry of Immigrant 
Absorption, State of Israel}

\author[E. J\"arvenp\"a\"a]{Esa J\"arvenp\"a\"a$^2$}
\address{Department of Mathematical Sciences, PO Box 3000, 90014 University 
         of Oulu, Finland$^2$}
\email{esa.jarvenpaa@oulu.fi$^2$}
\thanks{EJ acknowledges the support of the Centre of Excellence in
Analysis and Dynamics Research funded by the Academy of Finland and 
thanks the ICERM semester program on ``Dimension and Dynamics''.}
\thanks{We thank Maarit J\"arvenp\"a\"a for interesting discussions and 
many useful comments.} 
\subjclass[2010]{28A80, 37A50, 60D05, 60J80}
\keywords{Random sets, porosity, mean porosity}
\date{\today}

\dedicatory{}


\begin{abstract}
We study porosities in the Mandelbrot percolation process. We show that, almost 
surely at almost all points with respect to the natural measure, the mean 
porosities of the set and the natural measure exist and are equal to each other
for all parameter values outside of a countable exceptional set. As a 
corollary, we obtain that, almost surely at almost all points, the lower 
porosities of the set and the natural measure are equal to zero, whereas the 
upper porosities obtain their maximum values.
\end{abstract}

\maketitle


\section{Introduction}\label{intro}

The porosity of a set describes the sizes of holes in the set. The concept 
dates back to the 1920's when Denjoy introduced a notion which he called index 
(see \cite{De}). 
In today's terminology, this index is called the upper porosity (see 
Definition~\ref{defporoset}). The term porosity was introduced by Dol\v zenko 
in \cite{Do}. Intuitively, if the upper porosity of a set equals $\alpha$, 
then, in the set, there are
holes of relative size $\alpha$ at arbitrarily small distances.
On the other hand, the lower porosity (see 
Definition~\ref{defporoset}) guarantees the existence of holes of certain 
relative size at all sufficiently small distances.
The upper porosity turned out to be useful in order to describe properties 
of exceptional sets, for example, for measuring sizes of sets where certain
functions are non-differentiable. For more details about the upper porosity,
we refer to an article of Zaj\'\i\v cek \cite{Z}.
Mattila \cite{M} utilised the lower porosity to find upper bounds for Hausdorff
dimensions of set, and Salli \cite{S} verified the corresponding results for 
packing and box counting dimensions. 

It turns out that upper porosity cannot be used to estimate the dimension
of a set  (see \cite[Section 4.12]{M2}). An observation that there are sets 
which are not lower porous but  
nevertheless contain so many holes that their dimension is smaller than
the dimension of the ambient space, leads to the concept of mean
porosity of a set introduced by Koskela and Rohde \cite{KR} in order to study 
the boundary behaviour of conformal and quasiconformal mappings.
Mean porosity guarantees that certain percentage
of scales, that is, distances which are integer powers of some fixed number,
contain holes of fixed relative size. Koskela and Rohde showed that, if a 
subset of the $m$-dimensional Euclidean space is mean porous,
then its Hausdorff and packing dimensions are smaller than $m$. For a 
modification of their definition, see Definition \ref{defmean}.

The lower porosity of a measure was introduced by 
Eckmann and E. and M. J\"arvenp\"a\"a in \cite{EJJ}, the upper one by 
Mera and Mor\'an in \cite{MM} and the mean porosity by Beliaev and Smirnov in
\cite{BS}. The relations between porosities and dimensions of sets 
and measures have been investigated, for example, in
\cite{BS,BJJKRSS,JJ,JJ2,JJKRRS,JJKS,KRS,Shm2}. For further information on this 
subject, we refer to a survey by Shmerkin \cite{Shm}.
Porosity has also been used for studying the conical densities of measures 
(see \cite{KS,KS2}).
 
Note that sets with same dimension may have different 
porosities. In \cite{JJM}, E. and M. J\"arvenp\"a\"a and Mauldin and, in 
\cite{U}, Urba\'nski characterised deterministic iterated function systems 
whose attractors have positive porosity. Porosities of random recursive 
constructions were studied in \cite{JJM}. Particularly interesting random
constructions are those in which the copies of the seed set are glued 
together in such a way that there are no holes left. Thus, the corresponding 
deterministic system would be non-porous and the essential question is whether
the randomness in the construction makes the set or  measure porous.
A classical example is the Mandelbrot percolation process (also known as the 
fractal percolation) introduced by Mandelbrot
in 1974 in \cite{Man} (see Section \ref{perco}). In \cite{JJM}, it was shown 
that, almost surely, the points with minimum porosity as well as 
those with maximum porosity are dense in the 
limit set. However, the question about porosity of typical points and that of 
the natural measure remained open. Later, it turned out \cite{CORS}
that, for typical points, the lower porosity equals 0 and the upper 
one is equal to $\frac 12$ as conjectured in \cite{JJM}. Indeed, this is a 
corollary of the results of Chen et al. in \cite{CORS} dealing with estimates 
on the dimensions of sets of exceptional points regarding the porosity. 

In this paper, we prove that the mean porosities of the natural measure and of 
the limit set exist and are equal to each other almost surely at almost all
points with respect to the natural measure for all parameter values outside of
a countable set (see Theorem~\ref{equal}). We also show that mean porosities 
are continuous as a function of parameter outside this exceptional set (see
Theorem~\ref{meanporosityexists}). Unlike the upper and lower porosities, the 
mean porosities of the set and the natural measure at typical points 
are non-trivial. Indeed, we prove that almost surely 
the mean porosities of the set and the natural measure are positive and less 
than one for almost all points with respect to the natural measure for all 
non-trivial parameter values (see Corollaries~\ref{positive} and 
\ref{mainmeasure}). As an application of our results, we 
solve the conjecture of \cite{JJM} completely (and give a new proof for the 
part solved in \cite{CORS}) by showing that, almost surely for almost all points
with respect to the natural measure, the lower porosities of the
limit set and of the measure are equal to the minimum value of 0,
the upper porosity of the set attains its maximum value of
$\frac 12$ and the upper porosity of the measure also attains its
maximum value of 1 (see Corollary \ref{infsup}). 

The article is organised as follows. In Section~\ref{perco}, we explain some
basic facts about the Mandelbrot percolation and, in Section~\ref{porosities}, 
we define porosities and mean porosities and describe some
of their properties. Finally, in Section~\ref{results}, we prove
our results about mean porosities of the limit set and of the natural measure 
in the Mandelbrot percolation process.

\section{Mandelbrot percolation}\label{perco}

We begin by recalling some basic facts about Mandelbrot percolation in the 
$m$-dimensional Euclidean space $\R^m$, where 
$m\in\N=\{1,2,\dots\}$. Let $k\ge 2$ be an integer, 
$I=\{1,\dots,k^m\}$ and $I^*=\bigcup_{i=0}^\infty I^i$, where $I^0=\emptyset$. An 
element $\sigma\in I^i$ is called a word and its length is $|\sigma|=i$. For 
all $\sigma\in I^i$ and $\sigma'\in I^j$, we denote by
$\sigma\ast\sigma'$ the element of $I^{i+j}$ whose first $i$ coordinates are
those of $\sigma$ and the last $j$ coordinates are those of $\sigma'$. For all
$i\in\N$ and $\sigma\in I^*\cup I^\N,$ denote by
$\sigma|_i$ the word in $I^i$ formed by the first $i$ elements of $\sigma$. For
$\sigma\in I^*$ and $\tau\in I^*\cup I^\N$, we write $\sigma\prec\tau$ if
the sequence $\tau$ starts with $\sigma$. 

Let $\Omega$ be the set of 
functions $\omega\colon I^*\to\{c,n\}$ equipped with the topology induced by 
the metric $\rho(\omega,\omega')=k^{-|\omega\wedge\omega'|}$, where
\[
|\omega\wedge\omega'|=\min\{j\in\N\mid\exists \sigma\in I^j
  \text{ with }\omega(\sigma)\ne\omega'(\sigma)\}.
\]
Each $\omega\in\Omega$ can be thought of as a code
that tells us which cubes we choose (c) and which we neglect (n). More
precisely, let $\omega\in\Omega$. We start with the unit cube $[0,1]^m$ and 
denote it by $J_\emptyset$. We divide $J_\emptyset$ into $k^m$ closed $k$-adic 
cubes with side length $k^{-1}$, enumerate them with letters from alphabet $I$ 
and repeat this procedure inside each subcube. For all 
$\sigma\in I^i$, we use the notation $J_\sigma$ for the unique closed 
subcube of $J_\emptyset$ with side length $k^{-i}$ coded by $\sigma$.
The image of $\eta\in I^\N$ under the natural projection from 
$I^\N$ to $[0,1]^m$ is denoted by $x(\eta)$, that is, 
\[
x(\eta)=\bigcap_{i=0}^\infty J_{\eta|_i},
\]
where $\eta|_0=\emptyset$. If $\omega(\sigma)=n$ for $\sigma\in I^i$ then 
$J_\sigma(\omega)=\emptyset$, and if $\omega(\sigma)=c$ then
$J_\sigma(\omega)=J_\sigma$. Define
\[
K_\omega=\bigcap_{i=0}^\infty\bigcup_{\sigma\in I^i}J_\sigma(\omega).
\]

Fix $0\le p\le 1$. We make the above construction random by demanding that 
if $J_\sigma$ is chosen then $J_{\sigma\ast j}$, $j=1,\dots,k^m$, are
chosen independently with probability $p$. Let $P$ be the natural Borel 
probability measure on $\Omega$, that is, for all $\sigma\in I^*$ and 
$j=1,\dots,k^m$, 
\[
 \begin{split}
 &P(\omega(\emptyset)=c)=1,\\
 &P(\omega(\sigma*j)=c\mid\omega(\sigma)=c)=p,\\
 &P(\omega(\sigma*j)=n\mid\omega(\sigma)=n)=1. 
 \end{split}
\]
It is a well-known result in the theory of branching processes that if the 
expectation of the number of chosen cubes of side length $k^{-1}$
does not exceed one, then the limit set $K_\omega$ is $P$-almost surely an empty
set (see \cite[Theorem 1]{AN}). In our case, this expectation equals 
$k^mp$ and, thus, with positive probability, $K_\omega\ne\emptyset$ provided 
that $k^{-m}<p\le 1$. According to \cite[Theorem 1.1]{MW} (see also \cite{KP}),
the Hausdorff dimension of $K_\omega$ is $P$-almost surely equal to
\begin{equation}\label{dim}
d=\frac{\log(k^mp)}{\log k}=m+\frac{\log p}{\log k}
\end{equation}
provided that $K_\omega\ne\emptyset$. For $P$-almost all $\omega\in\Omega$,
there exists a natural Radon measure $\nu_\omega$ on $K_\omega$ (see 
\cite[Theorem 3.2]{MW})
and, moreover, there is a natural Radon probability measure $Q$ 
on $I^\N\times\Omega$ such that, for every Borel set 
$B\subset I^\N\times\Omega$, we have
\begin{equation}\label{Qdef}
Q(B)=\frac 1{(\diam J_\emptyset)^d}\int\mu_\omega(B_\omega)dP(\omega),
\end{equation}
where $B_\omega=\{\eta\in I^\N\mid (\eta,\omega)\in B\}$, $\nu_\omega$ is the 
image of $\mu_\omega$ under the natural projection and $\diam A$ is the diameter 
of a set $A$ (see \cite[(1.13)]{GMW}).

We denote by $\card A$ the number of elements in a set $A$.
For a word $\sigma\in I^*$, consider the martingale
$\{N_{j,\sigma}k^{-jd}\}_{j\in\N}$, where 
\[
N_{j,\sigma}=\card\{\tau\in I^*\mid |\tau|=|\sigma|+j,\tau\succ\sigma 
\text{ and }\omega(\tau)=c\},
\] 
and denote its almost sure (finite) limit by $X_\sigma(\omega)$.
For all $l\in\N\cup\{0\}$, define a random variable $X_l$ on
$I^\N\times\Omega$ by $X_l(\eta,\omega)=X_{\eta|_l}(\omega)$.
It is easy to see that, for $P$-almost all $\omega\in\Omega$,
\[ 
 X_\sigma(\omega)=\sum_{\tau\in I^j}k^{-jd}X_{\sigma*\tau}(\omega)
 \1_{\{\omega(\sigma*\tau)=c\}}
\]
for all $j\in\N$, where the characteristic function of a set $A$ is 
denoted by $\1_A$. Further, for $\sigma,\tau\in I^*$, the random
variables $X_\sigma$ and $X_\tau$ are identically 
distributed (see \cite[Proposition 1]{Be}) and, if 
$\sigma\not\prec\tau$ and $\tau\not\prec\sigma$, they are independent. 
Thus, $X_l$, $l\in\N\cup\{0\}$, have the same distribution.
According to \cite[Theorem 3.2]{MW}, the variables $X_\sigma(\omega)$ are related
to the measure $\nu_\omega$ for $P$-almost all $\omega\in\Omega$ by the formulae
\begin{align}
&\nu_\omega(J_\sigma)=(\diam J_\sigma)^dX_\sigma(\omega)\text{ for all }\sigma\in I^*
\text{ and}\label{nuversusX}\\
&\sum_{\substack{\tau\in I^j\\J_\tau\cap B\ne\emptyset}}l_\tau^dX_\tau(\omega)
\searrow\nu_\omega(B)\text{ as }j\to\infty\text{ for all Borel sets }B\subset K_\omega,\label{nudef}
\end{align}
where $l_\tau=\diam J_\tau=\diam J_\emptyset\,k^{-j}\1_{\{\omega(\tau)=c\}}$.

By \eqref{Qdef} and \eqref{nuversusX}, expectations with respect to the 
measures $P$ and $Q$ are connected in the following way (see also 
\cite[(1.16)]{GMW}): if $j\in\N$ and
$Y\colon I^\N\times\Omega\to\R$ is a random variable such that
$Y(\eta,\omega)=Y(\eta^\prime,\omega)$ provided that $\eta|_j=\eta^\prime|_j$, then
\begin{equation}\label{expectationqtop}
 E_Q[Y]=E_P\Bigl[\sum_{\substack{\sigma\in I^j\\
  \omega(\sigma)=c}} k^{-jd}X_\sigma Y(\sigma,\cdot)\Bigr].
\end{equation}
Hence, we have 
\begin{equation}\label{expectationXzero}
Q(X_l=0)=0\text{ and }E_Q[X_l]=E_P[X_0^2]<\infty
\end{equation}
for all $l\in\N\cup\{0\}$ (see \cite[Theorem 2.1]{MW}).

\section{Porosities}\label{porosities}

In this section, we define porosities and mean porosities of sets and  
measures and prove some basic properties for them.

\begin{definition}\label{defporoset}
Let $A\subset\R^m$, $x\in\R^m$ and $r>0$. The local 
porosity of $A$ at $x$ at distance $r$ is 
\[
 \begin{split}
  \por(A,x,r)=\sup\{\alpha\ge 0\mid\, &\text{there is }z\in\R^n
    \text{ such that }\\
  &B(z,\alpha r)\subset B(x,r)\setminus A\},
 \end{split}
\]
where the open ball centred at $x$ and with radius $r$ is denoted by $B(x,r)$.
The lower and upper porosities of $A$ at $x$ are defined as
\[
\lpor(A,x)=\liminf_{r\to 0}\por(A,x,r)\text{ and }
\upor(A,x)=\limsup_{r\to 0}\por(A,x,r),
\]
respectively. If $\lpor(A,x)=\upor(A,x)$, the common value, denoted by 
$\por(A,x)$, is called the porosity of $A$ at $x$.
\end{definition}

\begin{definition}\label{defporomeas}
The lower and upper porosities of a Radon measure $\mu$ on $\R^m$
at a point $x\in\R^m$ are defined by
\[
 \begin{split}
 &\lpor(\mu,x)=\lim_{\varepsilon\to 0}\liminf_{r\to 0}\por(\mu,x,r,\varepsilon)
   \text{ and}\\ 
 &\upor(\mu,x)=\lim_{\varepsilon\to 0}\limsup_{r\to 0}
   \por(\mu,x,r,\varepsilon),
 \end{split}
\]
respectively, where for all $r,\varepsilon>0$,
\[
 \begin{split}
  \por(\mu,x,r,\varepsilon)
   =&\sup\{\alpha\ge 0\mid\text{ there is }z\in\R^m\text{ such that }\\
   &B(z,\alpha r)\subset B(x,r)\text{ and }\mu(B(z,\alpha r))
    \le\varepsilon\mu(B(x,r))\}.
 \end{split}
\]
If the upper and lower porosities agree, the common value is called the porosity
of $\mu$ at $x$ and denoted by $\por(\mu,x)$.
\end{definition}

\begin{remark}\label{difdef}
(a) In some sources, the condition $B(z,\alpha r)\subset B(x,r)\setminus A$
in Definition~\ref{defporoset} is replaced by the condition 
$B(z,\alpha r)\cap A=\emptyset$, leading to the definition 
\[
 \begin{split}
  \widetilde{\por}(A,x,r)=\sup\{\alpha\ge 0\mid\, &\text{there is }z\in B(x,r)
   \text{ such that }\\
  &B(z,\alpha r)\cap A=\emptyset\}.
 \end{split}
\]
It is not difficult to see that
\[
\widetilde{\por}(A,x)=\frac{\por(A,x)}{1-\por(A,x)},
\]
which is valid both for the lower and upper porosity. Indeed, this follows
from two geometric observations. First, $B(z,\alpha r)\cap A=\emptyset$ 
with $z\in\partial B(x,r)$ if and only if 
$B(z,\alpha r)\subset B(x,(1+\alpha)r)\setminus A$ with 
$\partial B(z,\alpha r)\cap\partial B(x,(1+\alpha)r)\ne\emptyset$, where 
the boundary of a set $B$ is denoted by $\partial B$. Second, at local minima 
and maxima of the function $r\mapsto\por(A,x,r)$, we have
$\partial B(z,\alpha r)\cap\partial B(x,(1+\alpha)r)\ne\emptyset$, and at  
local minima and maxima of the function $r\mapsto\widetilde\por(A,x,r)$, we 
have $z\in\partial B(x,r)$.

(b) Unlike the dimension, the porosity is sensitive to the metric. For 
example, defining cube-porosities by using cubes instead of balls in the 
definition, there is no formula to convert porosities to cube-porosities or vice
versa. It is easy to construct a set
such that the cube-porosity attains its maximum value (at some point) but the 
porosity does not. Take, for example, the union of the x- and y-axes in the 
plane. However, the lower porosity is positive, if and only if the lower
cube-porosity is positive. 

(c) In general metric spaces, in addition to 
$B(z,\alpha r)\subset B(x,r)\setminus A$, it is sometimes useful to require 
that the empty ball $B(z,\alpha r)$ is inside the reference ball $B(x,r)$ also 
algebraically, that is, $d(x,z)+\alpha r\le r$. For further discussion about
this matter, see \cite{MMPZ}.
\end{remark}

The lower and upper porosities give the relative sizes of the largest and 
smallest holes, respectively. Taking into considerations the frequency of 
scales where the holes appear, leads to the notion of mean porosity. We proceed
by giving a definition which is adapted to the Mandelbrot percolation process. 
We will use the maximum metric $\varrho$, that is, 
$\varrho(x,y)=\max_{i\in\{1,\dots,m\}}\{|x_i-y_i|\}$, and denote by $B_\varrho(y,r)$ 
the open ball centred at $y$ and with radius $r$ with respect to this metric. 
Recall that the balls in the maximum metric are cubes whose 
faces are parallel to the coordinate planes.

\begin{definition}\label{ourmean}
Let $A\subset\R^m$, $\mu$ be a Radon measure on $\R^m$, $x\in\R^m$, 
$\alpha\in[0,1]$ and $\varepsilon>0$. For $j\in\N$, we say that $A$ has an 
$\alpha$-hole at scale $j$ near $x$ if there is a point $z\in Q_j^k(x)$ 
such that 
\[
B_\varrho(z,\tfrac 12\alpha k^{-j})\subset Q_j^k(x)\setminus A.
\]
Here $Q_j^k(x)$ is the half-open $k$-adic cube of side length $k^{-j}$ 
containing $x$ and $B_\varrho(z,\tfrac 12\alpha k^{-j})$ is called
an $\alpha$-hole. We say that $\mu$ has an $(\alpha,\varepsilon)$-hole at scale 
$j$ near $x$ if there is a point $z\in Q_j^k(x)$ such that 
\[
B_\varrho(z,\tfrac 12\alpha k^{-j})\subset Q_j^k(x)\text{ and }
\mu(B_\varrho(z,\tfrac 12\alpha k^{-j}))\le\varepsilon\mu(Q_j^k(x)).
\]
\end{definition}

\begin{remark}\label{whytwo}
Note that, unlike in Definition \ref{defporoset}, we have divided the radius of
the ball in the complement of $A$ as well as that with small measure
by $2$ and, therefore, $\alpha$ may attain values between $0$ and $1$. The 
reason for this is that the point $x$ may be arbitrarily close to the boundary
of $Q_j^k(x)$ and, if the whole cube $Q_j^k(x)$ is empty, it is natural to say 
that there is a hole of relative size $1$.  
\end{remark}

\begin{definition}\label{defmean}
Let $\alpha\in[0,1]$. The lower $\alpha$-mean 
porosity of a set $A\subset\R^m$ at a point $x\in\R^m$ is
\[
\underline{\kappa}(A,x,\alpha)=\liminf_{i\to\infty}\frac{N_i(A,x,\alpha)}i
\]
and the upper $\alpha$-mean porosity is 
\[ 
\overline{\kappa}(A,x,\alpha)=\limsup_{i\to\infty}\frac{N_i(A,x,\alpha)}i,
\]
where 
\[
 \begin{split} 
  N_i(A,x,\alpha)=\card\{j\in\N\mid\, & j\le i\text{ and }A\text{ has  
    an }\alpha\text{-hole at scale }\\
  &j\text{ near }x\}.
 \end{split}
\]
In the case the limit exists, it is called the $\alpha$-mean porosity and
denoted by $\kappa(A,x,\alpha)$. The lower $\alpha$-mean porosity of a Radon 
measure $\mu$ on $\R^m$ at $x\in\R^m$ is 
\[
\underline{\kappa}(\mu,x,\alpha)=\lim_{\e\to 0}\liminf_{i\to\infty}
  \frac{\widetilde N_i(\mu,x,\alpha,\e)}i
\]
and the upper one is
\[  
\overline{\kappa}(\mu,x,\alpha)=\lim_{\e\to 0}\limsup_{i\to\infty}
  \frac{\widetilde N_i(\mu,x,\alpha,\e)}i,
\]
where 
\[
 \begin{split}
  \widetilde N_i(\mu,x,\alpha,\varepsilon)=\card\{j\in\N\mid\, & j\le i
   \text{ and }\mu\text{ has an }(\alpha,\varepsilon)\text{-hole at}\\
  &\text{scale }j\text{ near } x\}.
 \end{split}
\]
If the lower and upper mean porosities coincide, the common value, denoted by 
$\kappa(\mu,x,\alpha)$, is called the $\alpha$-mean porosity of $\mu$.
\end{definition}

\begin{remark}\label{meanunstable}
Mean porosity is highly sensitive to the choice of 
parameters. The definition is given in terms of $k$-adic cubes. For the 
Mandelbrot percolation, this is natural. For general sets, fixing an integer 
$h>1$, a natural choice is to say that $A$ has an $\alpha$-hole at scale $j$ 
near $x$, if there is $z\in\R^m$ such that 
$B(z,\alpha h^{-j}r_0)\subset B(x,h^{-j}r_0)\setminus A$ for some (or for all) 
$h^{-1}<r_0\le 1$. However, the choice of $r_0$ and $h$ matters as will be shown
in Example~\ref{norel} below. Shmerkin proposed in \cite{Shm2} the following 
base and starting scale independent notion of lower mean porosity of a measure 
(which can be adapted for sets and upper porosity as well): a measure $\mu$ is 
lower $(\alpha,\kappa)$-mean porous at a point $x\in\R^m$ if
\[
\liminf_{\rho\to 1}\,(\log\tfrac 1\rho)^{-1}\int_\rho^1
  \1_{\{r\mid\por(\mu,x,r,\e)\ge\alpha\}}r^{-1}\,dr\ge\kappa\text{ for all }\e>0.
\]
The disadvantage of this definition is that it is more complicated to calculate
than the discrete version. To avoid these problems, one option
is to aim at qualitative results concerning all parameter values, as our 
approach will show.
\end{remark}

Next we give a simple example demonstrating the dependence of 
mean porosity on the starting scale and the base of scales.

\begin{example}\label{norel}
Fix an integer $h>1$. In this example, we use a modification of
Definitions~\ref{ourmean} and \ref{defmean} where $A\subset\R^m$ has an 
$\alpha$-hole at scale $j$ near $x$, if there exists 
$z\in\R^m$ such that $B(z,\alpha h^{-j})\subset B(x,h^{-j})\setminus A$. 
Let $x\in\R^2$. We define a set $A\subset\R^2$ as follows.
For all $i\in\N\cup\{0\}$, consider the half-open annulus 
$D(i)=\{y\in\R^2\mid h^{-i-1}<|y-x|\le h^{-i}\}$. Let 
$A=\bigcup_{i=0}^\infty D(3i+1)\cup D(3i+2)$, that is, we choose
two annuli out of every three successive ones and leave the third one empty.
In this case, $\kappa(A,x,\frac 12(1-h^{-1}))=\frac 13$. If we replaced $h$ by 
$h^3$ in the definition of scales, we would conclude that   
$\kappa(A,x,\frac 12(1-h^{-1}))=1$. (Note that the lower and upper 
porosities are equal.) If we define $A$ by starting with the two 
filled annuli, that is, $A=\bigcup_{i=0}^\infty D(3i)\cup D(3i+1)$, then
$\kappa(A,x,\frac 12(1-h^{-1}))=\frac 13$ using scales determined by $h$ and 
$\kappa(A,x,\frac 12(1-h^{-1}))=0$ if scales are determined by powers of $h^3$. 
By mixing these construction in a suitable way, one easily finds an example 
where $\kappa(A,x,\frac 12(1-h^{-1}))=\frac 13$ for scales given by $h$, but 
$\underline{\kappa}(A,x,\frac 12(1-h^{-1}))=0$ and 
$\overline{\kappa}(A,x,\frac 12(1-h^{-1}))=1$ if the scales are determined by 
$h^3$.
\end{example}

We finish this section with some measurability results. For that we need some 
notation.

\begin{definition}\label{holeindicator}
For all $j\in\N$ and $\alpha\in[0,1]$, define a function 
$\chi_j^\alpha\colon I^\N\times\Omega\to\{0,1\}$ by setting 
$\chi_j^\alpha(\eta,\omega)=1$, if and only if 
$K_\omega$ has an $\alpha$-hole at scale $j$ near $x(\eta)$. Define a function 
$\overline{\chi}_j^\alpha\colon I^\N\times\Omega\to\{0,1\}$ in the same 
way except that the $\alpha$-hole is a closed ball instead of an open one. 
For all $\alpha\in (0,1)$, $\e>0$ and $j\in\N$, define a function 
$\chi_j^{\alpha,\epsilon}\colon I^\N\times\Omega\to\{0,1\}$ by setting 
$\chi_j^{\alpha,\epsilon}(\eta,\omega)=1$, if and only if 
$\nu_\omega$ has an $(\alpha,\epsilon)$-hole at scale $j$ near $x(\eta)$. 
Finally, define a function 
$\overline\chi_j^{\alpha,\epsilon}\colon I^\N\times\Omega\to\{0,1\}$ by setting 
$\overline\chi_j^{\alpha,\epsilon}(\eta,\omega)=1$, if and only if there exists
$z\in Q_j^k(x(\eta))$ such that 
$\nu_\omega(\overline B_\varrho(z,\tfrac 12\alpha k^{-j}))<\varepsilon
  \nu_\omega(Q_j^k(x(\eta)))$. 
Here the closed ball in metric $\varrho$ centred at $z\in\R^m$ with radius 
$r>0$ is denoted by $\overline B_\varrho(z,r)$.
\end{definition}

\begin{lemma}\label{measurability}
The maps 
\[
\begin{split}
&(\eta,\omega)\mapsto\underline{\kappa}(K_\omega,x(\eta),\alpha),\\
&(\eta,\omega)\mapsto\overline{\kappa}(K_\omega,x(\eta),\alpha),\\
&(\eta,\omega)\mapsto\underline{\kappa}(\nu_\omega,x(\eta),\alpha)
  \text{ and}\\
&(\eta,\omega)\mapsto\overline{\kappa}(\nu_\omega,x(\eta),\alpha)
\end{split}
\]
are Borel measurable for all $\alpha\in[0,1]$. 
\end{lemma}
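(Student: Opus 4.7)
The plan is to reduce the statement to Borel measurability of the one-step hole indicators of Definition~\ref{holeindicator}. If $\chi_j^\alpha,\overline\chi_j^\alpha,\chi_j^{\alpha,\e},\overline\chi_j^{\alpha,\e}$ are Borel on $I^\N\times\Omega$ for every admissible $j,\alpha,\e$, then the counting functions $N_i(K_\omega,x(\eta),\alpha)=\sum_{j=1}^i\chi_j^\alpha(\eta,\omega)$ and $\widetilde N_i(\nu_\omega,x(\eta),\alpha,\e)=\sum_{j=1}^i\chi_j^{\alpha,\e}(\eta,\omega)$ are finite sums of Borel functions, so $N_i/i$ and $\widetilde N_i/i$ are Borel; taking $\liminf_{i\to\infty}$ or $\limsup_{i\to\infty}$ preserves Borel measurability. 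In the measure case the outer limit as $\e\to 0$ is monotone and realised along $\e=1/n$, so it too preserves Borel measurability. Thus the lemma reduces to measurability of the four indicators.

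For each fixed $j$, the cube $Q_j^k(x(\eta))$ depends on $\eta$ only through $\eta|_j$, so $I^\N$ splits into the $k^{mj}$ clopen cylinders $[\sigma]=\{\eta\in I^\N\mid\eta|_j=\sigma\}$, $\sigma\in I^j$, and it suffices to check measurability on each $[\sigma]\times\Omega$ with $Q_\sigma=Q_j^k(x(\eta))$ and $r=\tfrac12\alpha k^{-j}$ frozen. For $\chi_j^\alpha$, the set of admissible centres
\[
\{z\in\R^m\mid B_\varrho(z,r)\subset\Int(Q_\sigma)\text{ and }B_\varrho(z,r)\cap K_\omega=\emptyset\}
\]
is open in $\R^m$: containment of the open ball in $\Int(Q_\sigma)$ is equivalent to $B_\varrho(z,r)\subset Q_\sigma$ (because $B_\varrho(z,r)$ is open and $Q_\sigma$ half-open) and is an open condition on $z$, and disjointness from the closed set $K_\omega$ is also open in $z$. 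Hence non-emptiness is equivalent to the existence of a rational $z\in\mathbb{Q}^m$, and for each such $z$ one has
\[
\{B_\varrho(z,r)\cap K_\omega=\emptyset\}
=\bigcap_{m\in\N}\bigcup_{n\in\N}\{\overline B_\varrho(z,r-\tfrac1m)\cap K_\omega^n=\emptyset\},
\]
using $B_\varrho(z,r)=\bigcup_m\overline B_\varrho(z,r-\tfrac1m)$ together with compactness of $\overline B_\varrho(z,r-\tfrac1m)$ and the decreasing family $K_\omega^n=\bigcup_{\tau\in I^n}J_\tau(\omega)\supset K_\omega$. Each inner event depends on only finitely many coordinates of $\omega$ and is therefore a clopen cylinder in $\Omega$. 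Countable unions and intersections then yield Borel measurability of $\chi_j^\alpha$; the argument for $\overline\chi_j^\alpha$ is identical.

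For the measure indicators, I first observe that $\omega\mapsto\nu_\omega(E)$ is Borel for every Borel $E\subset\R^m$: for $E=J_\tau$ this is \eqref{nuversusX} since $X_\tau$ is Borel, and a monotone class argument based on \eqref{nudef} extends it to all Borel $E$. For $\overline\chi_j^{\alpha,\e}$ the same strategy applies, because $z\mapsto\nu_\omega(\overline B_\varrho(z,r))$ is upper semi-continuous in $z$ for each $\omega$, so $\{z\in Q_\sigma\mid\nu_\omega(\overline B_\varrho(z,r))<\e\nu_\omega(Q_\sigma)\}$ is relatively open in $Q_\sigma$ and admits a rational witness, for which the event in $\omega$ is Borel by what was just said. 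The tricky case is $\chi_j^{\alpha,\e}$: here $z\mapsto\nu_\omega(B_\varrho(z,r))$ is only lower semi-continuous, so the admissible set of centres is merely the intersection of an open set and a closed set, and need not meet $\mathbb{Q}^m$ even when non-empty. I would handle this via the monotone identity $\nu_\omega(B_\varrho(z,r))=\sup_n\nu_\omega(\overline B_\varrho(z,r-\tfrac1n))$, which after $\liminf$/$\limsup$ in $i$ and the outer $\lim_{\e\to 0}$ makes the $\chi$-based and $\overline\chi$-based mean porosities coincide; since the latter is Borel, so is the former.

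The main obstacle is exactly this last step: the open-ball/weak-inequality combination in the definition of a measure $(\alpha,\e)$-hole sits between the two semi-continuity regimes, so no single ``open admissible set plus rational witness plus cylinder approximation'' template handles $\chi_j^{\alpha,\e}$ directly. The remedy, and apparently the reason why $\overline\chi_j^{\alpha,\e}$ is introduced as a separate object in Definition~\ref{holeindicator}, is to carry out the measurability argument for $\overline\chi$ and then exchange $\chi$ for $\overline\chi$ in the limit via the monotone closed-ball approximation above.
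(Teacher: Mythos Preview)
Your overall structure---reduce to Borel measurability of the scale-$j$ indicators, then pass through finite sums and countable limits---matches the paper's. There is a minor slip and one genuine gap.

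The minor slip: the containment $B_\varrho(z,r)\subset Q_\sigma$ is a \emph{closed} condition on $z$ (it cuts out the box $\prod_l[a_l+r,b_l-r]$), not an open one as you claim. This does not actually break the rational-witness argument for $\chi_j^\alpha$ when $\alpha<1$, since any admissible $z$ can be perturbed into the interior of that box while remaining in the open set $\{z:B_\varrho(z,r)\cap K_\omega=\emptyset\}$; but the sentence as written is wrong. The paper sidesteps this by first showing that $\overline\chi_j^\alpha$ is lower semicontinuous on $I^\N\times\Omega$ (a closed $\alpha$-hole has positive distance to $K_\omega$, hence survives small perturbations of $\omega$) and then recovering $\chi_j^\alpha$ as the pointwise limit $\lim_i\overline\chi_j^{\alpha_i}$ along any $\alpha_i\nearrow\alpha$.

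The gap concerns $\chi_j^{\alpha,\e}$. You correctly diagnose that lower semicontinuity of $z\mapsto\nu_\omega(B_\varrho(z,r))$ blocks a direct rational-witness argument, and you propose instead to show that the $\chi$-based and $\overline\chi$-based mean porosities coincide after the outer $\e\to 0$ limit. But this equality is never established, and the radius approximation alone does not deliver it: one must simultaneously perturb $\e$ (to pass between the weak inequality in $\chi_j^{\alpha,\e}$ and the strict one in $\overline\chi_j^{\alpha,\e}$) and deal with the containment condition present in $\chi$ but absent from $\overline\chi$; without continuity of the mean porosity in $\alpha$, a comparison at nearby parameters does not yield pointwise equality. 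The paper carries out exactly this two-parameter sandwich, but at the level of the \emph{single} indicator: it shows that $\chi_j^{\alpha,\e}(\eta,\omega)=1$ if and only if $\overline\chi_j^{\alpha_i,\e_i}(\eta,\omega)=1$ along sequences $\alpha_i\nearrow\alpha$, $\e_i\searrow\e$, which exhibits $\chi_j^{\alpha,\e}$ itself as a countable limit of Borel functions. Your monotone identity is the right ingredient for one direction, but the argument has to be completed by the joint $(\alpha,\e)$ perturbation and a compactness step for the converse, rather than deferred to the mean-porosity limits.
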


\begin{proof}
Note that $\overline{\chi}_j^\alpha(\cdot,\omega)$ is locally constant for all 
$\omega\in\Omega$, that is, its value depends only on $\eta|_j$.
Further, suppose that $\overline{\chi}_j^\alpha(\eta,\omega)=1$. Then $K_\omega$
has a closed $\alpha$-hole $H$ at scale $j$ near $x(\eta)$. Since $H$ 
and $K_\omega$ are closed, their distance is positive. So there exists a finite
set $T\subset I^*$ such that $\omega(\tau)=n$ for all $\tau\in T$ and
\[
H\subset\bigcup_{\tau\in T}J_\tau.
\]
If $\omega'\in\Omega$ is close to $\omega$, then 
$\omega'(\tau)=n$ for all $\tau\in T$, which implies that 
$\overline{\chi}_j^\alpha(\eta,\omega')=1$. We conclude that
$\overline{\chi}_j^\alpha$ is continuous at $(\eta,\omega)$. Trivially, 
$\overline{\chi}_j^\alpha$ is lower semi-continuous at those points where
$\overline{\chi}_j^\alpha(\eta,\omega)=0$. Therefore, $\overline{\chi}_j^\alpha$ 
is lower semi-continuous. 

Let $\alpha_i$ be a strictly increasing sequence approaching $\alpha$. We claim
that
\begin{equation}\label{alphalim}
\lim_{i\to\infty}\overline{\chi}_j^{\alpha_i}(\eta,\omega)=\chi_j^\alpha(\eta,\omega)
\end{equation}
for all $(\eta,\omega)\in I^\N\times\Omega$. Indeed, obviously
$\chi_j^\alpha(\eta,\omega)\le\overline{\chi}_j^{\alpha_i}(\eta,\omega)$ for all
$i\in\N$,
and the sequence $(\overline{\chi}_j^{\alpha_i}(\eta,\omega))_{i\in\N}$ is
decreasing. Thus, it is enough to study the case 
$\lim_{i\to\infty}\overline{\chi}_j^{\alpha_i}(\eta,\omega)=1$. Let
$(\overline B_\varrho(z_i,\frac 12\alpha_ik^{-j}))_{i\in\N}$ be a corresponding
sequence of closed holes. In this case, one may find a convergent subsequence 
of $(z_i)_{i\in\N}$ converging to $z\in\R^d$ and
$B_\varrho(z,\frac 12\alpha k^{-j})\subset Q_j^k(x(\eta))\setminus K_\omega$, 
completing the proof of \eqref{alphalim}. As a limit of semi-continuous
functions, $\chi_j^\alpha$ is Borel measurable. Now 
$N_i(K_\omega,x(\eta),\alpha)=\sum_{j=1}^i\chi_j^\alpha(\eta,\omega)$, implying 
that the map 
$(\eta,\omega)\mapsto\underline{\kappa}(K_\omega,x(\eta),\alpha)$  
(as well as the upper mean porosity) is Borel measurable. 

By construction, the map $\omega\mapsto X_\tau(\omega)$ is Borel measurable for
all $\tau\in I^*$. Therefore, $\omega\mapsto\nu_\omega(B)$ is a Borel map for
all Borel sets $B\subset\R^m$ by \eqref{nudef}. In particular, the map
$(\eta,\omega)\mapsto\nu_\omega\bigl(\overline B_\varrho(z,\tfrac 12\alpha k^{-j})
\bigr)-\varepsilon\nu_\omega\bigl(Q_j^k(x(\eta))\bigr)$
is Borel measurable for all $z\in\R^m$, $\alpha\in[0,1]$, $\e>0$ and $j\in\N$.
Let $(z_i)_{i\in\N}$ be a dense set in $[0,1]^m$. Let $s>0$, $\alpha\in[0,1]$
and $j\in\N$. Suppose that there exists $z\in Q_j^k(x(\eta))$ such that 
$\nu_\omega(\overline B_\varrho(z,\tfrac 12\alpha k^{-j}))<s$. Since the map
$x\mapsto\nu_\omega(\overline B_\varrho(x,r))$ is upper semicontinuous, there 
exists $z_i\in Q_j^k(x(\eta))$ such that
$\nu_\omega(\overline B_\varrho(z_i,\tfrac 12\alpha k^{-j}))<s$. Thus,
$\overline\chi_j^{\alpha,\e}$ is Borel measurable. Further, 
$\chi_j^{\alpha,\e}(\eta,\omega)=1$ if and only if there exist an increasing
sequence $(\alpha_i)_{i\in\N}$ tending to $\alpha$ and a decreasing sequence
$(\e_i)_{i\in\N}$ tending to $\e$ such that  
$\overline\chi_j^{\alpha_i,\e_i}(\eta,\omega)=1$. Therefore, $\chi_j^{\alpha,\e}$
is Borel measurable, and the claim follows as in the case of mean porosities
of sets.
\end{proof}

\begin{remark}\label{monotonicity}
(a) Note that, for all $(\eta,\omega)\in I^\N\times\Omega$, the function 
$\alpha\mapsto\chi_0^\alpha(\eta,\omega)$ is decreasing 
and, thus, the lower and upper mean porosity functions are also 
decreasing as functions of $\alpha$.

(b) Later, we will need modifications of the functions $\chi_j^\alpha$
defined in the proof of Lemma~\ref{measurability}. Their Borel measurability
can be proven analogously to that of $\chi_j^\alpha$.
\end{remark}

\section{Results}\label{results}

\renewcommand{\theenumi}{\roman{enumi}}

In this section, we state and prove our results concerning mean
porosities of Mandelbrot percolation and its natural measure.
To prove the existence of mean porosity and to 
compare the mean porosities of the limit set and the construction measure,
we need a tool to establish the validity of the strong law of large numbers for 
certain sequences of random variables. We will use \cite[Theorem 1]{HRV} (see 
also \cite[Corollary 11]{L}), which we state (in a simplified form) for the 
convenience of the reader.

\begin{theorem}\label{HRV}
Let $\{Y_n\}_{n\in\N}$ be a sequence of square-integrable random variables 
and suppose that there exists a sequence of constants $(\rho_k)_{k\in\N}$
such that 
\[
\sup_{n\in\N}|\Cov(Y_n,Y_{n+k})|\le\rho_k
\]
for all $k\in\N$. Assume that 
\[
\sum_{n=1}^\infty\frac{\Var(Y_n)\log^2n}{n^2}<\infty\text{ and }
  \sum_{k=1}^\infty\rho_k<\infty.
\]
Then $\{Y_n\}_{n\in\N}$ satisfies the strong law of large numbers. Here the 
covariance and variance are denoted by $\Cov$ and $\Var$, respectively.
\end{theorem}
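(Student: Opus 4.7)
The plan is to prove almost sure convergence of $S_n/n$ to zero, where $S_n=\sum_{i=1}^n(Y_i-E[Y_i])$; centering reduces the problem to the case $E[Y_n]=0$ for all $n$, which does not affect variances or covariances. I would first obtain a clean variance bound for the partial sums. Writing
\[
\Var(S_n)=\sum_{i=1}^n\Var(Y_i)+2\sum_{1\le i<j\le n}\Cov(Y_i,Y_j),
\]
the hypothesis $|\Cov(Y_n,Y_{n+k})|\le\rho_k$ together with $\sum_k\rho_k<\infty$ yields $|\sum_{i<j}\Cov(Y_i,Y_j)|\le n\sum_k\rho_k$, so that $\Var(S_n)\le\sum_{i=1}^n\Var(Y_i)+Cn$ for a finite constant $C$.

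Next I would pass to the subsequence $n_k=2^k$ via Chebyshev. For any $\varepsilon>0$,
\[
P\bigl(|S_{2^k}|>\varepsilon 2^k\bigr)\le\frac{\Var(S_{2^k})}{\varepsilon^2\,2^{2k}}.
\]
A direct summation using $\Var(S_{2^k})\le\sum_{i\le 2^k}\Var(Y_i)+C\cdot 2^k$, combined with the hypothesis $\sum_n\Var(Y_n)(\log n)^2/n^2<\infty$ (summation by parts reshuffles $\sum_i\Var(Y_i)$ into exactly the right dyadic tails), shows that $\sum_k P(|S_{2^k}|>\varepsilon 2^k)<\infty$. Borel--Cantelli then yields $S_{2^k}/2^k\to 0$ almost surely.

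The remaining step is to interpolate: for $2^k\le n<2^{k+1}$, estimate
\[
|S_n|\le|S_{2^k}|+\max_{2^k\le n<2^{k+1}}|S_n-S_{2^k}|,
\]
and control the maximum by a Rademacher--Menshov / M\'oricz-type maximal inequality adapted to weakly correlated sequences. Such an inequality has the form $E[\max_{k\le n}S_k^2]\le(\log_2 2n)^2\,g(n)$, where $g(n)$ is a superadditive majorant of $\Var(S_n)$; it is proved by a standard dyadic splitting of the index set, each split contributing one factor of $\log n$. Combining this maximal inequality with Chebyshev and Borel--Cantelli along the blocks $[2^k,2^{k+1})$ upgrades subsequential convergence to full convergence $S_n/n\to 0$ a.s.

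The main obstacle, as usual for SLLN under weak dependence, is the maximal inequality in the last step: one must track both the individual variances $\Var(Y_n)$ and the bound $\rho_k$ on the off-diagonal covariances through the dyadic decomposition, and it is precisely the extra $(\log n)^2$ factor inherited from the M\'oricz-style argument that the hypothesis $\sum_n\Var(Y_n)(\log n)^2/n^2<\infty$ is tailored to absorb. The covariance summability $\sum_k\rho_k<\infty$ ensures the cross-term contribution to $\Var(S_n)$ grows only linearly, which is what allows the $\log^2$ weighting to remain summable.
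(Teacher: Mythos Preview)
The paper does not prove this theorem at all: it is quoted from Hu--Rosalsky--Volodin \cite{HRV} (with a pointer to Lyons \cite{L}) and stated without proof ``for the convenience of the reader''. So there is no proof in the paper to compare your attempt against.

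That said, your outline is the standard route to results of this type and is essentially the argument found in the cited sources. A couple of minor remarks on the sketch itself. Your subsequential step along $n_k=2^k$ in fact only uses the weaker condition $\sum_n \Var(Y_n)/n^2<\infty$: interchanging the sums gives
\[
\sum_{k\ge 0}\frac{1}{2^{2k}}\sum_{i\le 2^k}\Var(Y_i)
=\sum_{i\ge 1}\Var(Y_i)\sum_{k:\,2^k\ge i}2^{-2k}
\le C\sum_{i\ge 1}\frac{\Var(Y_i)}{i^2},
\]
so the extra $(\log n)^2$ is not needed here. It is, as you correctly say, consumed entirely by the M\'oricz/Rademacher--Menshov maximal inequality in the interpolation step, where the dyadic splitting produces a $(\log n)^2$ factor multiplying the superadditive variance majorant. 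The one place where your write-up is genuinely sketchy is the application of that maximal inequality over the blocks $[2^k,2^{k+1})$: you should make explicit the superadditive majorant $g(m,n)$ bounding $\Var(S_n-S_m)$ (the obvious choice is $\sum_{m<i\le n}\Var(Y_i)+2(n-m)\sum_j\rho_j$), and then verify that $\sum_k (\log 2^k)^2 g(2^k,2^{k+1})/2^{2k}<\infty$ follows from the two hypotheses. With that made precise, the argument goes through.
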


We will apply Theorem~\ref{HRV} to stationary sequences of random variables
which are indicator functions of events with equal probabilities.
In this setup, all conditions of the theorem will be satisfied if
\begin{equation}\label{convergencecovariances}
\sum_{j=1}^\infty \Cov(Y_0,Y_j)<\infty
\end{equation}
and, in particular, if $Y_i$ is independent from $Y_j$ once
$|i-j|$ is greater than some fixed integer. 

For all $\alpha\in[0,1]$ and $j,r\in\N$, define 
$\chi_{j,r}^\alpha\colon I^\N\times\Omega\to\{0,1\}$ similarly to 
$\chi_j^\alpha$ with the exception that the whole hole is assumed to be in 
$J_{\eta|_j}\setminus J_{\eta|_{j+r}}$. Observe that 
$\chi_{j,r}^\alpha(\eta,\omega)=\chi_{j,r}^\alpha(\eta',\omega)$ provided that
$\eta|_{j+r}=\eta'|_{j+r}$. Therefore, for any $\tau\in I^{j+r}$, we may define
$\chi_{j,r}^\alpha(\tau,\omega)=\chi_{j,r}^\alpha(\eta,\omega)$, where 
$\eta|_{j+r}=\tau$. Note that, given $\omega(\tau|_j)=c$, the value of the
function $\chi_{j,r}^\alpha(\tau,\cdot)$ depends only on the restriction of 
$\omega$ to $J_{\tau|_j}\setminus J_{\tau|_{j+r}}$. 

\begin{lemma}\label{independence}
Let $\alpha\in[0,1]$ and $r\in\N$. The random variables $\chi_{j,r}^\alpha$
and $\chi_{i,r}^\alpha$ are $Q$-independent for all $i,j\in\N$ with
$|i-j|\ge r$ and 
\[
\Cov(\chi_{n,r}^\alpha,\chi_{n+k,r}^\alpha)=\Cov(\chi_{0,r}^\alpha,\chi_{k,r}^\alpha)
\]
for all $n,k,r\in\N$.
\end{lemma}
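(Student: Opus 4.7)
The plan is to convert $Q$-expectations into $P$-expectations via formula \eqref{expectationqtop} and exploit that, under $P$, the Mandelbrot construction on disjoint subtrees of $I^*$ consists of independent copies of the process.

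First, I would pin down the locality of $\chi_{j,r}^\alpha(\tau,\omega)$. For a fixed word $\tau\in I^{j+r}$, conditional on $\omega(\tau)=c$ (which forces $\omega(\tau|_s)=c$ for every $s\le j+r$), the value of $\chi_{j,r}^\alpha(\tau,\omega)$ is determined by $K_\omega\cap(J_{\tau|_j}\setminus J_{\tau|_{j+r}})$, hence only by the restriction of $\omega$ to the ``sibling subtrees'' rooted at the children $\tau|_{s-1}\ast i$ with $i\ne\tau_s$ and $s\in\{j+1,\dots,j+r\}$. Analogously, $\chi_{i,r}^\alpha(\tau,\cdot)$ depends only on the sibling subtrees along the segment from $\tau|_i$ to $\tau|_{i+r}$, and $X_\tau$ depends only on the subtree strictly below $\tau$. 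When $|i-j|\ge r$, say $j+r\le i$, these three collections of subtrees are pairwise disjoint.

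For the independence, apply \eqref{expectationqtop} at level $n=i+r$ to obtain
\[
E_Q[\chi_{j,r}^\alpha\chi_{i,r}^\alpha]=\sum_{\tau\in I^{i+r}}k^{-(i+r)d}E_P\bigl[\1_{\{\omega(\tau)=c\}}X_\tau\,\chi_{j,r}^\alpha(\tau,\cdot)\chi_{i,r}^\alpha(\tau,\cdot)\bigr].
\]
By the disjointness above and the branching independence of the construction, the three factors $X_\tau$, $\chi_{j,r}^\alpha(\tau,\cdot)$, $\chi_{i,r}^\alpha(\tau,\cdot)$ are mutually $P$-independent under the conditional law $P(\,\cdot\mid\omega(\tau)=c)$. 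Self-similarity yields $E_P[X_\tau\mid\omega(\tau)=c]=E_P[X_0]=1$ (the last equality coming from the martingale property together with the $L^2$-convergence implied by \eqref{expectationXzero}), and gives a constant $F=E_P[\chi_{j,r}^\alpha(\tau,\cdot)\mid\omega(\tau)=c]$ depending only on $r$ and $\alpha$, and likewise for the other factor. Combined with the normalization
\[
\sum_{\tau\in I^{i+r}}k^{-(i+r)d}P(\omega(\tau)=c)=(k^{-d}k^mp)^{i+r}=1,
\]
the sum collapses to $F^2$, and the same calculation applied to each $E_Q[\chi_{j,r}^\alpha]$ separately gives $F$, so $E_Q[\chi_{j,r}^\alpha\chi_{i,r}^\alpha]=E_Q[\chi_{j,r}^\alpha]E_Q[\chi_{i,r}^\alpha]$.

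For the covariance identity, I would run the analogous calculation for $E_Q[\chi_{n,r}^\alpha\chi_{n+k,r}^\alpha]$ at level $n+k+r$. The conditional expectation $E_P[X_\tau\chi_{n,r}^\alpha(\tau,\cdot)\chi_{n+k,r}^\alpha(\tau,\cdot)\mid\omega(\tau)=c]$ is determined by the joint law of two blocks of $r$ consecutive levels of sibling subtrees offset by $k$; since these sibling subtrees are i.i.d.\ copies of the percolation, this joint law is invariant under shifting $n$, so the conditional expectation depends only on $r$, $k$, $\alpha$. The normalization identity then yields $E_Q[\chi_{n,r}^\alpha\chi_{n+k,r}^\alpha]=E_Q[\chi_{0,r}^\alpha\chi_{k,r}^\alpha]$ and, applied to single factors, shows $E_Q[\chi_{n,r}^\alpha]$ is independent of $n$, giving the asserted covariance equality. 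The main obstacle is the geometric bookkeeping in the first step: the hole is a ball in $\R^m$ rather than a union of tree-cubes, so one must argue carefully that the existence of such a hole inside $J_{\tau|_j}\setminus J_{\tau|_{j+r}}$ depends only on $K_\omega$ in that annular region and hence on the stated sibling subtrees; once this locality is rigorously formulated, the rest is a routine application of independence on disjoint subtrees and the self-similarity of the construction.
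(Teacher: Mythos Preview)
Your overall strategy---transfer to $P$ via \eqref{expectationqtop}, then exploit independence on disjoint subtrees---is exactly the paper's approach. However, there is a genuine gap in the execution. You assert that $F=E_P[\chi_{j,r}^\alpha(\tau,\cdot)\mid\omega(\tau)=c]$ is a constant depending only on $r$ and $\alpha$, and then collapse the sum to $F^2$. This is false: the conditional expectation depends on the relative position of $J_{\tau|_{j+r}}$ inside $J_{\tau|_j}$, i.e.\ on the segment $(\tau_{j+1},\dots,\tau_{j+r})$. For instance, with $m=1$, $k=3$, $r=1$ and $\tfrac13<\alpha<\tfrac23$, one has $\chi_{j,1}^\alpha(\tau,\cdot)\equiv 0$ whenever $J_{\tau|_{j+1}}$ is the middle third (no room for an $\alpha$-hole in either outer third), while the probability is positive when $J_{\tau|_{j+1}}$ is an outer third. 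So you cannot pull $F$ out of the sum over $\tau$ as a single number.

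Your argument is easily repaired: by self-similarity, $E_P[\chi_{j,r}^\alpha(\tau,\cdot)\mid\omega(\tau)=c]$ depends only on the segment $(\tau_{j+1},\dots,\tau_{j+r})$, and $E_P[\chi_{i,r}^\alpha(\tau,\cdot)\mid\omega(\tau)=c]$ only on $(\tau_{i+1},\dots,\tau_{i+r})$; since $j+r\le i$ these segments are disjoint, and the sum over $\tau\in I^{i+r}$ (with the constant weight $k^{-(i+r)d}P(\omega(\tau)=c)$) factorises. The paper sidesteps the issue differently: it splits the sum in two stages, first over $\tau\in I^{i+r}$ and then over $\sigma\in I^{j-i}$, and recognises the inner conditional expectation
\[
E_P\Bigl[\sum_{\sigma\in I^{j-i}}k^{-(j-i)d}\1_{\{\omega(\tau\ast\sigma)=c\}}X_{\tau\ast\sigma}\chi_{j,r}^\alpha(\tau\ast\sigma,\cdot)\,\Big|\,\omega(\tau)=c\Bigr]
\]
as $E_Q[\chi_{0,r}^\alpha]$ by self-similarity of the whole subtree below $\tau$; this is a genuine constant, and what remains is precisely $E_Q[\chi_{i,r}^\alpha]$ by the single-factor calculation. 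The same issue and the same two fixes apply to your covariance argument: the conditional expectation there depends on the segment $(\tau_{n+1},\dots,\tau_{n+k+r})$, but this dependence is the same function of that segment for every $n$, so the full sum over $\tau\in I^{n+k+r}$ is indeed independent of $n$.
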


\begin{proof}
Observe that, for any $\tau\in I^{j+r}$, the variables $X_\tau$ and 
$\chi_{0,r}^\alpha(\tau,\cdot)$ are $P$-independent given $\omega(\tau)=c$. 
Recalling that $E_P[X_\tau\mid\omega(\tau)=c]=1$ by \eqref{Qdef} and 
\eqref{nuversusX}, we obtain by \eqref{expectationqtop} that
\begin{align*}
E_Q[\chi_{0,r}^\alpha]&=E_P\bigl[\sum_{\tau\in I^r}
k^{-rd}\1_{\{\omega(\tau)=c\}}X_\tau
  \chi_{0,r}^\alpha(\tau,\cdot)\bigr]\\
 &=E_P\bigl[\sum_{\tau\in I^r}k^{-rd}\1_{\{\omega(\tau)=c\}}
  E_P[\chi_{0,r}^\alpha(\tau,\cdot)\mid\omega(\tau)=c]\bigr].
\end{align*}
Further, for any $j\in\N$ (using the above calculation in the third equality),
\begin{align*}
&E_Q[\chi_{j,r}^\alpha]=E_P\bigl[\sum_{\sigma\in I^j}k^{-jd}\1_{\{\omega(\sigma)=c\}}
  \sum_{\tau\in I^r}k^{-rd}\1_{\{\omega(\sigma\ast\tau)=c\}}X_{\sigma\ast\tau}
  \chi_{j,r}^\alpha(\sigma\ast\tau,\cdot)\bigr]\\
 &=E_P\Bigl[\sum_{\sigma\in I^j}k^{-jd}\1_{\{\omega(\sigma)=c\}}
  E_P\bigl[\sum_{\tau\in I^r}k^{-rd}\1_{\{\omega(\sigma\ast\tau)=c\}}X_{\sigma\ast\tau}
  \chi_{j,r}^\alpha(\sigma\ast\tau,\cdot)\mid\omega(\sigma)=c\bigr]\Bigr]\\
 &=E_P\bigl[\sum_{\sigma\in I^j}k^{-jd}\1_{\{\omega(\sigma)=c\}}E_Q[\chi_{0,r}^\alpha]
  \bigr]=E_Q[\chi_{0,r}^\alpha].
\end{align*}
Let $i,j\in\N$ be such that $j-i\ge r$. Then (using the above calculation in 
the third and fourth equality) 
\begin{align*}
&E_Q[\chi_{i,r}^\alpha\chi_{j,r}^\alpha]\\
&=E_P\bigl[\sum_{\tau\in I^{i+r}}k^{-(i+r)d}\1_{\{\omega(\tau)=c\}}
 \chi_{i,r}^\alpha(\tau,\cdot)
 \sum_{\sigma\in I^{j-i}}k^{-(j-i)d}\1_{\{\omega(\tau\ast\sigma)=c\}}X_{\tau\ast\sigma}
 \chi_{j,r}^\alpha(\tau\ast\sigma,\cdot)\bigr]\\
&=E_P\Bigl[\sum_{\tau\in I^{i+r}}k^{-(i+r)d}\1_{\{\omega(\tau)=c\}}
 E_P[\chi_{i,r}^\alpha(\tau,\cdot)\mid\omega(\tau)=c]\\
&\phantom{llllll}\times E_P\bigl[
 \sum_{\sigma\in I^{j-i}}k^{-(j-i)d}X_{\tau\ast\sigma}\1_{\{\omega(\tau\ast\sigma)=c\}}
 \chi_{j,r}^\alpha(\tau\ast\sigma,\cdot)\mid\omega(\tau)=c\bigr]\Bigr]\\
&=E_P\bigl[\sum_{\tau\in I^{i+r}}k^{-(i+r)d}\1_{\{\omega(\tau)=c\}}
 E_P[\chi_{i,r}^\alpha(\tau,\cdot)\mid\omega(\tau)=c] E_Q[\chi_{0,r}^\alpha]
 =E_Q[\chi_{i,r}^\alpha]E_Q[\chi_{j,r}^\alpha].
\end{align*}
The last claim follows from a similar calculation.
\end{proof}

Next we prove a lemma which gives lower and upper bounds for mean porosities
at typical points.

\begin{lemma}\label{holecontinuity} For all $\alpha\in(0,1)$, we have
\[
E_Q[\overline{\chi}_0^\alpha] \le
\underline{\kappa}(K_\omega,x(\eta),\alpha)
\le\overline{\kappa}(K_\omega,x(\eta),\alpha)
\le E_Q[ \chi_0^\alpha]
\]
for $Q$-almost all $(\eta,\omega)\in I^{\N}\times\Omega$.
\end{lemma}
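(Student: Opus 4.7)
The middle inequality is trivial; my plan for the two outer bounds is to sandwich $\chi_j^\alpha$ between two $r$-dependent truncations that are $Q$-stationary and enjoy enough independence to fall under Theorem~\ref{HRV}, and then to send the truncation parameter $r$ to infinity.

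For the lower bound I will use the variables $\chi_{j,r}^\alpha$ introduced before Lemma~\ref{independence}. Since $\chi_{j,r}^\alpha \le \chi_j^\alpha$ and, for fixed $r$, the family $\{\chi_{j,r}^\alpha\}_{j\in\N}$ is $Q$-stationary with $\chi_{j,r}^\alpha$ and $\chi_{j+k,r}^\alpha$ $Q$-independent whenever $k \ge r$ (by Lemma~\ref{independence}), the criterion in \eqref{convergencecovariances} and Theorem~\ref{HRV} give $i^{-1}\sum_{j=1}^i \chi_{j,r}^\alpha \to E_Q[\chi_{0,r}^\alpha]$ $Q$-a.s.\ for every $r$. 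Taking a countable intersection over $r$ of these full-measure events and using the monotonicity of $\chi_{0,r}^\alpha$ in $r$ (the forbidden cube $J_{\eta|_r}$ shrinks as $r$ grows), monotone convergence yields $\underline{\kappa}(K_\omega, x(\eta), \alpha) \ge E_Q[\sup_{r} \chi_{0,r}^\alpha]$. To finish, note that by \eqref{expectationXzero} we have $x(\eta) \in K_\omega$ $Q$-a.s., so any closed $\alpha$-hole $\overline{B}$ witnessing $\overline{\chi}_0^\alpha(\eta,\omega) = 1$ lies at strictly positive distance from $x(\eta)$ and is therefore disjoint from $J_{\eta|_r}$ for all sufficiently large $r$; hence $\overline{\chi}_0^\alpha \le \sup_{r} \chi_{0,r}^\alpha$ and the left-hand inequality follows.

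For the upper bound I will introduce the dual truncation $\widetilde{\chi}_{j,r}^\alpha$, set equal to $1$ exactly when there exists $z$ with $B := B_\varrho(z,\tfrac{1}{2}\alpha k^{-j}) \subset Q_j^k(x(\eta))$ and $(B \setminus J_{\eta|_{j+r}}) \cap K_\omega = \emptyset$, so that the hole is required to avoid $K_\omega$ only outside the innermost sub-cube. Immediately $\chi_j^\alpha \le \widetilde{\chi}_{j,r}^\alpha$, and the value of $\widetilde{\chi}_{j,r}^\alpha$ depends only on the restriction of $\omega$ to $J_{\eta|_j}\setminus J_{\eta|_{j+r}}$, so the calculation behind Lemma~\ref{independence} transfers verbatim to provide $Q$-stationarity and $Q$-independence for $|j-j'|\ge r$. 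Theorem~\ref{HRV} then yields $\overline{\kappa}(K_\omega, x(\eta), \alpha) \le E_Q[\widetilde{\chi}_{0,r}^\alpha]$ for every $r$, and since $\widetilde{\chi}_{0,r}^\alpha$ is non-increasing in $r$, monotone convergence reduces the problem to proving
\[
\lim_{r\to\infty}\widetilde{\chi}_{0,r}^\alpha = \chi_0^\alpha \quad Q\text{-a.s.}
\]

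This last identity will be the main obstacle. Given witnesses $z_r$ for arbitrarily large $r$, I will pass to a subsequential limit $z^*$ and set $B^* := B_\varrho(z^*, \tfrac{1}{2}\alpha)$; any $y \in B^* \cap K_\omega$ with $y \ne x(\eta)$ would, for all sufficiently large $r$, lie in $B_\varrho(z_r, \tfrac{1}{2}\alpha) \setminus J_{\eta|_r}$, contradicting the defining property of $\widetilde{\chi}_{0,r}^\alpha$. Thus $B^* \cap K_\omega \subset \{x(\eta)\}$. If $x(\eta) \notin B^*$ we obtain a genuine open $\alpha$-hole and $\chi_0^\alpha(\eta,\omega) = 1$; the only remaining possibility, $x(\eta) \in B^*$, would force $x(\eta)$ to be an isolated point of $K_\omega$. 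I will rule this out by observing that, $Q$-a.s., some sibling cell of $\eta|_{j+1}$ has a surviving sub-process at infinitely many levels $j$, so points of $K_\omega$ accumulate at $x(\eta)$: at each level a given sibling contributes no limit point with probability $1 - p\theta < 1$ (where $\theta > 0$ is the Mandelbrot survival probability), and the independence of the sub-trees across levels makes the product of these probabilities vanish, so the ``eventual total failure'' event has $Q$-measure zero.
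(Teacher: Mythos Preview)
Your argument is correct, and the lower bound is handled exactly as in the paper. The upper bound, however, is obtained by a genuinely different device. Instead of introducing the auxiliary family $\widetilde{\chi}_{j,r}^\alpha$, the paper stays with the family $\chi_{j,r}^\beta$ already defined, but shifts the hole parameter: since $x(\eta)\in K_\omega$ lies outside any open $\alpha$-hole $B$, one can translate and shrink $B$ in the coordinate direction separating it from $x(\eta)$ to produce an $(\alpha-k^{-r})$-hole entirely contained in $J_{\eta|_j}\setminus J_{\eta|_{j+r}}$, giving $\chi_j^\alpha\le\chi_{j,r}^{\alpha-k^{-r}}$. Theorem~\ref{HRV} then yields $\overline{\kappa}\le E_Q[\chi_{0,r}^{\alpha-k^{-r}}]\le E_Q[\overline{\chi}_0^{\alpha-2k^{-r}}]$, and the limit in $r$ is exactly~\eqref{alphalim}, already established in Lemma~\ref{measurability}. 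This route is more economical: it reuses the same truncation family, and crucially it bypasses the non-isolation fact altogether. Your approach is the natural dual truncation to the lower bound and is perfectly valid, but it costs the extra step of ruling out isolated points. Incidentally, that step admits a shorter justification than your Borel--Cantelli argument: if $x(\eta)$ were isolated in $K_\omega$, then for some $j$ one would have $K_\omega\cap J_{\eta|_j}=\{x(\eta)\}$ and hence, by~\eqref{nuversusX} together with $\nu_\omega(\{x(\eta)\})=0$, $X_j(\eta,\omega)=0$, contradicting~\eqref{expectationXzero}.
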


\begin{proof}
Note that for every $\alpha\in(0,1)$ and $r\in\N$ with $k^{-r}<\alpha$, we have
\[
 \chi_{j,r}^\alpha(\eta,\omega)\le\chi_j^\alpha(\eta,\omega)\le 
  \chi_{j,r}^{\alpha-k^{-r}}(\eta,\omega)
\]
for all $(\eta,\omega)\in I^{\N}\times\Omega$ satisfying $x(\eta)\in K_\omega$.
Recall that $\nu_\omega$ is supported on $K_\omega$ for $P$-almost all 
$\omega\in\Omega$. 
Combining Lemma~\ref{independence} and Theorem~\ref{HRV}, we conclude that, 
for all $r\in\N$ and $Q$-almost all $(\eta,\omega)\in I^{\N}\times\Omega$,
we have
\begin{align*}
E_Q[\chi_{0,r}^\alpha]&=\lim_{n\to\infty}\frac 1n \sum_{j=1}^n 
  \chi_{j,r}^\alpha(\eta,\omega)\le\underline\kappa(K_\omega,x(\eta),\alpha)\\
 &\le\overline\kappa(K_\omega,x(\eta),\alpha)\le\lim_{n\to\infty}\frac 1n 
  \sum_{j=1}^n \chi_{j,r}^{\alpha-k^{-r}}(\eta,\omega)=E_Q[\chi_{0,r}^{\alpha-k^{-r}}].
\end{align*}
Observe that, for all $(\eta,\omega)\in I^{\N}\times\Omega$ satisfying 
$x(\eta)\in K_\omega$, we have 
$\lim_{r\to\infty}\chi_{0,r}^\alpha(\eta,\omega)
   \ge\overline\chi_0^\alpha(\eta,\omega)$, since the distance between a closed
$\alpha$-hole and $K_\omega$ is positive. Further, the inequality
$\chi_{0,r}^{\alpha-k^{-r}}\le\overline{\chi}_0^{\alpha-2k^{-r}}$ is always valid and
$\lim_{r\to\infty}\overline{\chi}_0^{\alpha-2k^{-r}}=\chi_0^\alpha$ by \eqref{alphalim}.
Hence,
\[
E_Q[\overline{\chi}_0^\alpha]\le\underline\kappa(K_\omega,x(\eta),\alpha)
\le\overline\kappa(K_\omega,x(\eta),\alpha)\le E_Q[ \chi_0^\alpha]
\]
for $Q$-almost all $(\eta,\omega)\in I^{\N}\times\Omega$.
\end{proof}

In fact, the upper bound we have found is an exact equality.

\begin{proposition}\label{exactupper}
For all $\alpha\in (0,1)$, we have that
\[
\overline{\kappa}(K_\omega,x(\eta),\alpha)=E_Q[ \chi_0^\alpha]
\]
for $Q$-almost all $(\eta,\omega)\in I^{\N}\times\Omega$. 
\end{proposition}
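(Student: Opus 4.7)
Plan: The inequality $\overline{\kappa}(K_\omega, x(\eta), \alpha) \le E_Q[\chi_0^\alpha]$ has already been supplied by Lemma~\ref{holecontinuity}, so the entire task is to establish the matching lower bound
\[
\overline{\kappa}(K_\omega, x(\eta), \alpha) \ge E_Q[\chi_0^\alpha]
\qquad\text{for }Q\text{-a.e.\ }(\eta,\omega).
\]
The natural mechanism for producing the missing mass is the identity $\overline{\chi}_0^{\alpha_i} \downarrow \chi_0^\alpha$ as $\alpha_i \uparrow \alpha$ from \eqref{alphalim}, which by dominated convergence gives $E_Q[\overline{\chi}_0^{\alpha_i}] \uparrow E_Q[\chi_0^\alpha]$. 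The idea is to transfer the lower bound at $\alpha_i$ (which already becomes sharp in the limit) to a lower bound at $\alpha$ itself.

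First I would fix a strictly increasing sequence $\alpha_i \uparrow \alpha$ with $\alpha_i<\alpha$ and apply Lemma~\ref{holecontinuity} simultaneously at each $\alpha_i$ (by a countable intersection of full-measure sets), obtaining
\[
\underline{\kappa}(K_\omega, x(\eta), \alpha_i) \ge E_Q[\overline{\chi}_0^{\alpha_i}] \qquad \text{for all }i,\; Q\text{-a.e.\ }(\eta,\omega).
\]
Since $\chi_j^{\alpha_i}\ge\chi_j^\alpha$, one can decompose $N_n(K_\omega,x(\eta),\alpha) = N_n(K_\omega,x(\eta),\alpha_i) - \sum_{j=1}^n(\chi_j^{\alpha_i}-\chi_j^\alpha)$, and the elementary inequality $\limsup(a_n-b_n)\ge\liminf a_n - \limsup b_n$ yields
\[
\overline{\kappa}(K_\omega, x(\eta), \alpha) \ge E_Q[\overline{\chi}_0^{\alpha_i}] - \limsup_{n\to\infty}\frac{1}{n}\sum_{j=1}^n (\chi_j^{\alpha_i}-\chi_j^\alpha).
\]
It would then suffice to show that the $\limsup$ on the right-hand side is bounded above by $E_Q[\chi_0^{\alpha_i}-\chi_0^\alpha]$, which tends to $0$ as $i\to\infty$ by dominated convergence; taking $i\to\infty$ would complete the proof.

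The principal technical obstacle is proving this sharp SLLN-type bound for the difference $\chi_j^{\alpha_i}-\chi_j^\alpha$. The naive sandwich $\chi_j^{\alpha_i}-\chi_j^\alpha \le \chi_{j,r}^{\alpha_i-k^{-r}}-\chi_{j,r}^\alpha$, combined with Theorem~\ref{HRV} (invoked through Lemma~\ref{independence}) and the limit $r\to\infty$, only delivers the weaker bound $E_Q[\chi_0^{\alpha_i}]-E_Q[\overline{\chi}_0^\alpha]$; after $i\to\infty$ this merely recovers $\overline{\kappa}(\alpha)\ge E_Q[\overline{\chi}_0^\alpha]$, which is precisely the content of Lemma~\ref{holecontinuity}. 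To close the gap one needs a refined family of truncated indicators — in the spirit of the modifications alluded to in Remark~\ref{monotonicity}(b) — that still enjoy the weak-dependence structure required by Theorem~\ref{HRV} (independence once scales are separated by at least $r$), yet whose $r\to\infty$ limit captures open $\alpha$-holes rather than only closed ones, so that the lower approximation to $\chi_j^\alpha$ becomes tight. Constructing such refined indicators, and verifying that their limiting $Q$-expectations really reproduce $E_Q[\chi_0^\alpha]$, is the main technical work that must be carried out.
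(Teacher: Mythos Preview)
Your diagnosis of the obstacle is accurate, but the proposed remedy (constructing ``refined indicators'' that approximate $\chi_j^\alpha$ from below with finite-range dependence and limiting expectation $E_Q[\chi_0^\alpha]$) is not what is actually needed, and it is unclear how one would build such indicators: an \emph{open} $\alpha$-hole near $x(\eta)$ is not determined by any finite level of the construction, so any $r$-truncated lower approximation will, in the limit $r\to\infty$, detect at best closed $\alpha$-holes and return you to $E_Q[\overline\chi_0^\alpha]$, exactly as you observed.

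The paper circumvents this by abandoning the SLLN route altogether for the lower bound. One works directly with the untruncated indicators $\chi_j^\alpha$ and shows only that $\Cov(\chi_0^\alpha,\chi_j^\alpha)\to 0$ as $j\to\infty$. This follows from the very same sandwich you already wrote down: for $2k^{-j}<\alpha$ one has $\chi_0^\alpha\le\chi_{0,j}^{\alpha-k^{-j}}$, the pair $\chi_{0,j}^{\alpha-k^{-j}}$ and $\chi_j^\alpha$ are $Q$-independent (Lemma~\ref{independence}), and hence
\[
\Cov(\chi_0^\alpha,\chi_j^\alpha)\le E_Q[\chi_0^\alpha]\,E_Q\bigl[\chi_{0,j}^{\alpha-k^{-j}}-\chi_0^\alpha\bigr]
\le E_Q[\chi_0^\alpha]\,E_Q\bigl[\overline\chi_0^{\alpha-2k^{-j}}-\chi_0^\alpha\bigr]\xrightarrow[j\to\infty]{}0
\]
by \eqref{alphalim} and dominated convergence. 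Bernstein's theorem then gives $\frac1n\sum_{j=1}^n\chi_j^\alpha\to E_Q[\chi_0^\alpha]$ \emph{in probability}. Convergence in probability furnishes a subsequence converging $Q$-almost surely, and that is already enough to force $\overline\kappa(K_\omega,x(\eta),\alpha)\ge E_Q[\chi_0^\alpha]$; combined with the upper bound from Lemma~\ref{holecontinuity} one obtains the equality. The insight you are missing is that for a $\limsup$ statement the weak law suffices---no strong law, and hence no finite-range approximation of $\chi_j^\alpha$, is required.
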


\begin{proof}
We start by proving that, for all $\alpha\in (0,1)$,
\[
\lim_{j\to\infty}\Cov(\chi_0^\alpha,\chi_j^\alpha)=0.
\]
Let $\alpha\in (0,1)$. Note that, for all $(\eta,\omega)\in I^{\N}\times\Omega$
satisfying $x(\eta)\in K_\omega$, we have 
\[
\chi_0^\alpha(\eta,\omega)\le\chi_{0,r}^{\alpha-k^{-r}}(\eta,\omega)
\le\overline{\chi}_0^{\alpha-2k^{-r}}(\eta,\omega)
\]
for all $r\in\N$ such that $2k^{-r}<\alpha$. Therefore, for all $j\in\N$ with
$2k^{-j}<\alpha$, we have the following estimate
\[
\Cov(\chi_0^\alpha,\chi_j^\alpha)=E_Q[\chi_0^\alpha\chi_j^\alpha]
-E_Q[\chi_0^\alpha]E_Q[\chi_j^\alpha]\le E_Q[\chi_{0,j}^{\alpha-k^{-j}}\chi_j^\alpha]-E_Q[\chi_0^\alpha]^2.
\]
Next we note that the random variables $\chi_{0,j}^{\alpha-k^{-j}}$ and 
$\chi_j^\alpha$ are $Q$-independent (compare Lemma~\ref{independence}),
hence
\[
\Cov(\chi_0^\alpha,\chi_j^\alpha)\le E_Q[\chi_0^\alpha]E_Q[\chi_{0,j}^{\alpha-k^{-j}}
  -\chi_0^\alpha]
\le E_Q[\chi_0^\alpha]E_Q[\overline{\chi}_0^{\alpha-2k^{-j}}-\chi_0^\alpha] .
\]
Since $\lim_{j\to\infty}(\alpha-2k^{-j})=\alpha$, the equality \eqref{alphalim} 
and the dominated convergence theorem imply that 
$\lim_{j\to\infty}E_Q[\overline{\chi}_0^{\alpha-2k^{-j}}-\chi_0^\alpha]=0$. Now, by 
Bernstein's theorem, the sequence 
$\frac 1n N_n(A,x,\alpha)=\frac 1n\sum_{i=1}^n\chi_i^\alpha$
converges in probability to $E_Q[\chi_0^\alpha]$. Once we have the convergence 
in probability, we can find a subsequence converging almost surely and,
therefore, the upper bound in Theorem~\ref{holecontinuity} is attained.
\end{proof}

Define
\[
D=\{\alpha\in (0,1)\mid\beta\mapsto E_Q[\chi_0^\beta]\text{ is discontinuous at }
  \beta=\alpha\}.
\]
Since $\beta\mapsto E_Q[\chi_0^\beta]$ is decreasing, the set $D$ is countable.

\begin{theorem}\label{meanporosityexists}
For $Q$-almost all $(\eta,\omega)\in I^{\N}\times\Omega$, we have
\[
\kappa(K_\omega,x(\eta),\alpha)=E_Q[ \chi_0^\alpha]
\]
for all $\alpha\in(0,1)\setminus D$. In particular, for $Q$-almost all 
$(\eta,\omega)\in I^{\N}\times\Omega$, the function 
$\alpha\mapsto\kappa(K_\omega,x(\eta),\alpha)$ is defined and continuous at all
$\alpha\in(0,1)\setminus D$.
\end{theorem}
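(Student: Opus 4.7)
The plan is to combine the two-sided bound of Lemma~\ref{holecontinuity} with the sharp upper bound of Proposition~\ref{exactupper}. These already give, for each fixed $\alpha\in(0,1)$, $\overline{\kappa}(K_\omega,x(\eta),\alpha)=E_Q[\chi_0^\alpha]$ and $\underline{\kappa}(K_\omega,x(\eta),\alpha)\ge E_Q[\overline{\chi}_0^\alpha]$ for $Q$-almost all $(\eta,\omega)$. The essential task is therefore to prove that $E_Q[\overline{\chi}_0^\alpha]=E_Q[\chi_0^\alpha]$, and then to promote the $\alpha$-by-$\alpha$ almost-sure statement to a single almost-sure statement valid simultaneously at every $\alpha\in(0,1)\setminus D$.

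For the first point, $\overline{\chi}_0^\alpha\ge\chi_0^\alpha$ is immediate. For the reverse inequality, any closed $\alpha$-hole contains a closed $\alpha'$-hole for $\alpha'<\alpha$, so $\overline{\chi}_0^\alpha\le\overline{\chi}_0^{\alpha'}$. Picking a sequence $\alpha_i\nearrow\alpha$, the dominated convergence theorem combined with \eqref{alphalim} gives $E_Q[\overline{\chi}_0^\alpha]\le\lim_iE_Q[\overline{\chi}_0^{\alpha_i}]=E_Q[\chi_0^\alpha]$. Hence, for each fixed $\alpha\in(0,1)$, there exists a $Q$-null set $N_\alpha$ outside which $\underline{\kappa}(K_\omega,x(\eta),\alpha)=\overline{\kappa}(K_\omega,x(\eta),\alpha)=E_Q[\chi_0^\alpha]$. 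To handle all parameters at once, I would fix a countable dense set $\{\alpha_n\}\subset(0,1)$ and put $N=\bigcup_nN_{\alpha_n}$, still $Q$-null, so that on $(I^\N\times\Omega)\setminus N$ the identity holds for every $\alpha_n$. For $(\eta,\omega)\notin N$ and arbitrary $\alpha\in(0,1)\setminus D$, choose $\alpha_{i_k}\nearrow\alpha$ and $\alpha_{j_k}\searrow\alpha$ from $\{\alpha_n\}$ and exploit the monotonicity of $\underline{\kappa}(K_\omega,x(\eta),\cdot)$ and $\overline{\kappa}(K_\omega,x(\eta),\cdot)$ in $\alpha$ (Remark~\ref{monotonicity}(a)) to obtain the sandwich
\begin{equation*}
E_Q[\chi_0^{\alpha_{i_k}}]\ge\overline{\kappa}(K_\omega,x(\eta),\alpha)\ge\underline{\kappa}(K_\omega,x(\eta),\alpha)\ge E_Q[\chi_0^{\alpha_{j_k}}].
\end{equation*}
Letting $k\to\infty$, the outer terms tend to the left and right limits of the decreasing function $\beta\mapsto E_Q[\chi_0^\beta]$ at $\alpha$, and the hypothesis $\alpha\notin D$ forces both limits to equal $E_Q[\chi_0^\alpha]$, pinning the middle values to the same value.

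Continuity at any $\alpha\in(0,1)\setminus D$ of the map $\alpha\mapsto\kappa(K_\omega,x(\eta),\alpha)$ is then automatic, since on the complement of $N$ this map agrees at every point of $(0,1)\setminus D$ with $\beta\mapsto E_Q[\chi_0^\beta]$, which is continuous there by definition of $D$. I do not foresee a genuine obstacle; the two points requiring care are tracking the direction of monotonicity of $\overline{\chi}_0^\alpha$ versus $\chi_0^\alpha$ when approaching $\alpha$ from below, and verifying that the sandwich truly closes only at continuity points of $\beta\mapsto E_Q[\chi_0^\beta]$, which is precisely why the countable exceptional set $D$ appears in the statement.
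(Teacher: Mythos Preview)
Your overall strategy is the same as the paper's, but the argument for the equality $E_Q[\overline{\chi}_0^\alpha]=E_Q[\chi_0^\alpha]$ contains a genuine error. The ``immediate'' inequality $\overline{\chi}_0^\alpha\ge\chi_0^\alpha$ is false: a closed $\alpha$-hole trivially contains the open $\alpha$-ball with the same centre and radius, so the correct relation is $\overline{\chi}_0^\alpha\le\chi_0^\alpha$ (as the paper records just before its proof). Your ``reverse inequality'' argument also establishes $E_Q[\overline{\chi}_0^\alpha]\le E_Q[\chi_0^\alpha]$, so in fact you have proved the same direction twice and never obtained $\ge$. Consequently, your conclusion that $E_Q[\overline{\chi}_0^\alpha]=E_Q[\chi_0^\alpha]$ for \emph{every} $\alpha\in(0,1)$ is not justified --- and indeed it is false in general, as Proposition~\ref{comparison} shows.

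The correct way to obtain the missing inequality is to approach $\alpha$ from \emph{above}: for $\alpha'>\alpha$ an open $\alpha'$-hole contains a closed $\alpha$-hole, so $\chi_0^{\alpha'}\le\overline{\chi}_0^\alpha$, whence $E_Q[\overline{\chi}_0^\alpha]\ge\lim_{\alpha'\searrow\alpha}E_Q[\chi_0^{\alpha'}]$. This right limit equals $E_Q[\chi_0^\alpha]$ precisely when $\beta\mapsto E_Q[\chi_0^\beta]$ is continuous at $\alpha$, that is, when $\alpha\notin D$. Thus the sets $N_\alpha$ exist only for $\alpha\in(0,1)\setminus D$; choose your countable dense set inside $(0,1)\setminus D$ (possible since $D$ is countable), and the rest of your monotonicity sandwich goes through unchanged. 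With this repair your proof coincides with the paper's (your appeal to Proposition~\ref{exactupper} is then superfluous, since Lemma~\ref{holecontinuity} alone pins both $\underline{\kappa}$ and $\overline{\kappa}$ once the two expectations agree).
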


\begin{proof}
Since $\chi_0^{\alpha'}\le\overline\chi_0^\alpha\le\chi_0^\alpha$ for all 
$\alpha'>\alpha$, we have that $E_Q[\overline\chi_0^\alpha]=E_Q[\chi_0^\alpha]$ for
all $\alpha\in (0,1)\setminus D$. Lemma~\ref{holecontinuity} implies that, for 
all $\alpha\in (0,1)\setminus D$, there exists a Borel set 
$B_\alpha\subset I^{\N}\times\Omega$ such that 
$\kappa(K_\omega,x(\eta),\alpha)=E_Q[ \chi_0^\alpha]$ for all 
$(\eta,\omega)\in B_\alpha$ and $Q(B_\alpha)=1$. Let $(\alpha_i)_{i\in\N}$ be a 
dense set in $(0,1)$. Since the functions  
$\alpha\mapsto\underline \kappa(K_\omega,x(\eta),\alpha)$ and
$\alpha\mapsto\overline\kappa(K_\omega,x(\eta),\alpha)$ are decreasing, we have
for all $(\eta,\omega)\in\bigcap_{i=1}^\infty B_{\alpha_i}$ that 
$\kappa(K_\omega,x(\eta),\alpha)=E_Q[ \chi_0^\alpha]$ for all 
$\alpha\in(0,1)\setminus D$. Since $Q(\bigcap_{i=1}^\infty B_{\alpha_i})=1$, the 
proof is complete.
\end{proof}

\begin{proposition}\label{comparison}
Suppose that $m=2$ and $p>k^{-1}$. Then the set $D$ is non-empty. 
\end{proposition}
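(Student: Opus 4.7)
The plan is to exhibit an atom of the law of $M(\eta,\omega):=\sup\{\beta:\chi_0^\beta(\eta,\omega)=1\}$ under $Q$. From the identity $\chi_0^\alpha=\lim_{i\to\infty}\overline\chi_0^{\alpha_i}$ for $\alpha_i\uparrow\alpha$ established in the proof of Lemma~\ref{measurability} combined with the dominated convergence theorem, the function $\beta\mapsto E_Q[\chi_0^\beta]=Q(M\ge\beta)$ is left-continuous and non-increasing, so its right-jump at $\alpha_0$ is exactly $Q(M=\alpha_0)$. Hence any $\alpha_0\in(0,1)$ with $Q(M=\alpha_0)>0$ lies in $D$. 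The natural candidate is $\alpha_0=1/k$, produced by a level-one configuration that creates a gap of width exactly $1/k$.

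First I would fix the event $F\subset\Omega$ that at level one the entire bottom row $\{J_{(i,1)}\}_{i=1}^k$ is unchosen while all other $k^2-k$ cells are chosen. This event has $P$-probability $(1-p)^kp^{k^2-k}>0$, and on $F$ the open horizontal strip $(0,1)\times(0,1/k)$ is disjoint from $K_\omega$, so $M\ge 1/k$. Next I would construct a positive-probability sub-event $G\subset F$ on which $M=1/k$ exactly: any open cube of side $s>1/k$ in $[0,1)^2\setminus K_\omega$ either lies entirely in the top $k-1$ rows (and then must span at least two columns since $s>1/k$) or straddles the line $y=1/k$ (with its upper portion a rectangle of width $s>1/k$ and positive height). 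In both cases, $K_\omega$ restricted to the top region is forced to avoid some rectangle of width strictly exceeding $1/k$. The hypothesis $p>k^{-1}$ enters here: it makes the one-dimensional projection of $K_\omega$ to the $x$-axis supercritical (each column is alive with probability $1-(1-p)^k$, which exceeds $1/k$ for $p>1/k$), so the Falconer-Grimmett-type result gives that with positive probability the projection of $K_\omega$ restricted to the top region covers all of $[0,1]$; together with a positive-probability control over the level-two sub-cells just above $y=1/k$ ruling out empty horizontal strips of width $>1/k$, this rules out any empty cube of side $>1/k$ in the complement.

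The hard part will be to organize these controls into a single positive-probability event, since one must simultaneously rule out forbidden cubes at every position and every side-length slightly exceeding $1/k$. The natural strategy is to exploit the independence of the $k(k-1)$ scaled sub-percolations in the top-row level-one cells: in each such cell the random sub-percolation has a positive-probability ``good'' event that simultaneously forces the sub-percolation to be dense near the cell's bottom edge (no empty corner combining with the bottom strip to produce a cube of side $>1/k$) and prevents any large internal empty rectangle. Intersecting these independent events across the $k(k-1)$ cells gives a positive-probability $G\subset F$ on which no empty cube of side $>1/k$ exists, hence $M=1/k$ on $G$. Applying \eqref{expectationqtop} with $j=0$ to the $\omega$-measurable indicator $\1_{\{M=1/k\}}$ and using $E_P[X_\emptyset]=1$, we obtain $Q(M=1/k)\ge E_P[X_\emptyset\1_G]>0$ (the last positivity coming from the positive conditional survival of at least one top-row subtree), so $1/k\in D$ and $D\ne\emptyset$.
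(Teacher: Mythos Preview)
Your framework is exactly right and in fact equivalent to the paper's: since $\overline\chi_0^\alpha=1$ if and only if $M>\alpha$ (a closed hole in the open set $[0,1)^2\setminus K_\omega$ can always be fattened slightly), the event $\{M=\alpha\}$ coincides with $\{\chi_0^\alpha=1,\ \overline\chi_0^\alpha=0\}$, which is precisely what the paper exhibits with positive $Q$-measure.

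The gap is in your geometric construction. Removing a whole bottom row creates a long strip, and your blocking argument --- full $x$-projection of the top region plus some unspecified ``density near the bottom edge'' --- is not sufficient as stated. Full projection only says that every vertical line meets $K_\omega$ somewhere above; it does not prevent a cube of side $>1/k$ from sitting in the top rows (two adjacent columns could have aligned gaps), nor does it force $K_\omega$ to come down to $y=1/k$ over every $x$-interval of length $1/k$. You acknowledge the ``hard part'' remains; the missing ingredient is a precise positive-probability event that rules out \emph{every} closed cube of side $1/k$, not just those of a few special shapes.

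The paper sidesteps this by a much simpler configuration. Instead of deleting a row, delete a \emph{single} level-$r$ cube $J_\sigma$ and, using the Falconer--Grimmett result that $K_\omega$ intersects every face of $[0,1]^2$ with positive probability, require (independently) that the sub-percolation in \emph{each} level-$(r+1)$ cube $J_\tau$ with $\tau|_r\ne\sigma$ meets all four of its own faces. This single condition has positive $P$-probability by independence, and it kills every closed cube $C$ of side $k^{-r}$ in one stroke: $C$ always contains either a complete level-$(r+1)$ cube $J_\tau$ outside $J_\sigma$, or a complete face of such a cube along $\partial J_\sigma$, and $K_\omega$ meets that face. Thus $\chi_0^{k^{-r}}=1$ (the interior of $J_\sigma$ is an open hole) while $\overline\chi_0^{k^{-r}}=0$. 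The ``touches all faces at the next level'' condition is the clean replacement for your projection-and-density heuristics; with it your row-deletion scheme would also go through, but deleting one cube makes the geometry trivial.
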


\begin{proof}
Since $E_Q[\chi_0^{\alpha'}]\le E_Q[\overline\chi_0^\alpha]\le E_Q[\chi_0^\alpha]$
for all $\alpha'>\alpha$, it is enough to show that there exists 
$\alpha\in (0,1)$ such that $E_Q[\overline\chi_0^\alpha]<E_Q[\chi_0^\alpha]$.
This, in turn, follows if 
\begin{equation}\label{positivemeasure}
Q(\{(\eta,\omega)\in I^\N\times\Omega\mid\overline\chi_0^\alpha(\eta,\omega)=0
 \text{ and }\chi_0^\alpha(\eta,\omega)=1\})>0,
\end{equation}
since $\overline\chi_0^\alpha\le\chi_0^\alpha$.

It is shown in \cite{FG} that, if $m=2$ and $p>k^{-1}$, the projection of 
$K_\omega$ onto the $x$-axis is the  whole unit interval $[0,1]$ with positive 
probability. In particular, $K_\omega$ intersects all the faces of $[0,1]^2$ 
with positive probability. Let $\alpha=k^{-r}$ for some $r\in\N$. Fix 
$\sigma\in I^r$. Then there exists a Borel set $B\subset\Omega$ with $P(B)>0$
such that, for all $\omega\in B$, we have $\omega(\sigma)=n$ and  
$K_\omega$ intersects all the faces of $J_\tau$ for all $\tau\in I^{r+1}$ with
$\tau|_r\ne\sigma$. In this case, $\chi_0^\alpha(\eta,\omega)=1$ and 
$\overline\chi_0^\alpha(\eta,\omega)=0$ for all $\omega\in B$ for 
$\mu_\omega$-almost all $\eta\in B_\omega$. This implies inequality 
\eqref{positivemeasure}.
\end{proof}

\begin{remark}\label{otherdiscontinuities}
A similar construction as in the proof of Propostion~\ref{positivemeasure} can 
be done for any positive $\alpha=\sum_{j=1}^nq_jk^{-r_j}<1$, where $r_j\in\N$ and 
$q_j\in\mathbb Z$, that is, for any hole which is a finite union of 
construction squares. We do not know whether $\kappa(K_\omega,x(\eta),\alpha)$ 
exists for $\alpha\in D$.
\end{remark}

\begin{corollary}\label{positive}
For $P$-almost all $\omega\in\Omega$ and for 
$\nu_\omega$-almost all $x\in K_\omega$, we have that
\[
0<\underline{\kappa}(K_\omega,x,\alpha)\le \overline{\kappa}(K_\omega,x,\alpha)<1
\]
for all $\alpha\in (0,1)$, $\kappa(K_\omega,x,0)=1$ and $\kappa(K_\omega,x,1)=0$. 
\end{corollary}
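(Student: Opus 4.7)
The plan is to deduce the strict bounds claimed in the corollary from the sandwich
\[
E_Q[\overline\chi_0^\alpha] \le \underline\kappa(K_\omega,x(\eta),\alpha)
 \le \overline\kappa(K_\omega,x(\eta),\alpha) \le E_Q[\chi_0^\alpha]
\]
furnished by Lemma~\ref{holecontinuity} and holding $Q$-almost surely for each $\alpha \in (0,1)$. Thus at each such $\alpha$ the task reduces to showing $E_Q[\overline\chi_0^\alpha] > 0$ and $E_Q[\chi_0^\alpha] < 1$. Since $Q_0^k(x(\eta)) = [0,1)^m$ for every $\eta$, both $\chi_0^\alpha$ and $\overline\chi_0^\alpha$ are functions of $\omega$ alone, and setting $j = 0$ in \eqref{expectationqtop} rewrites these expectations as $E_P[X_\emptyset\chi_0^\alpha]$ and $E_P[X_\emptyset\overline\chi_0^\alpha]$. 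Since $E_P[X_\emptyset] = 1$, it suffices to exhibit two $P$-positive-probability events on which respectively $\overline\chi_0^\alpha = 1$ with $X_\emptyset > 0$, and $\chi_0^\alpha = 0$ with $X_\emptyset > 0$. The translation from $Q$-almost everywhere to the $(P,\nu_\omega)$-formulation of the corollary is then immediate from \eqref{Qdef} together with $\nu_\omega$ being the push-forward of $\mu_\omega$ under $\eta\mapsto x(\eta)$.

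For $E_Q[\overline\chi_0^\alpha] > 0$, fix $r \in \N$ with $k^{-r} < 1 - \alpha$ and let $R \subset [0,1]^m$ be an axis-aligned sub-box, formed as a union of level-$r$ cubes, whose side is strictly between $\alpha$ and $1$. With positive $P$-probability, $\omega(\sigma) = c$ for every $\sigma$ with $|\sigma| \le r-1$, $\omega(\sigma) = n$ for every $\sigma \in I^r$ with $J_\sigma \subset R$, and at least one of the level-$r$ subtrees outside $R$ survives (by independence and $p \in (k^{-m},1)$). On this event $R \cap K_\omega = \emptyset$, so a closed $\alpha$-ball fits in $R$ giving $\overline\chi_0^\alpha = 1$, and $X_\emptyset > 0$. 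For $E_Q[\chi_0^\alpha] < 1$, fix $r \in \N$ with $k^{-r-1} < \alpha/2$ and consider the event that $\omega(\sigma) = c$ for every $\sigma$ with $|\sigma| \le r+1$ together with survival of every one of the $k^{(r+1)m}$ level-$(r+1)$ subtrees. By independence and supercriticality this event has positive $P$-probability, and on it $K_\omega$ meets every closed level-$(r+1)$ cube, so $K_\omega$ is $k^{-r-1}$-dense in $[0,1]^m$; hence every ball of side $\alpha$ intersects $K_\omega$, giving $\chi_0^\alpha = 0$ and $X_\emptyset > 0$.

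The endpoint $\alpha = 0$ is immediate because $B_\varrho(z,0) = \emptyset$ is contained in every set, so $\chi_j^0 \equiv 1$. For $\alpha = 1$, the only open ball of side $k^{-j}$ contained in the half-open cube $Q_j^k(x(\eta))$ is the open interior of $Q_j^k(x(\eta))$; this interior meets $K_\omega$ whenever $x(\eta)$ lies in that interior, so $\chi_j^1 = 1$ forces $x(\eta) \in \partial Q_j^k(x(\eta))$. A direct first-moment computation using \eqref{nudef}, $E_P[X_\sigma\1_{\{\omega(\sigma)=c\}}] = p^{|\sigma|}$ and $p = k^{d-m}$ bounds the expected level-$n$ approximant of the $\nu_\omega$-mass of any fixed coordinate hyperplane by $O(k^{-n}) \to 0$, so $\nu_\omega(\partial Q_j^k(x)) = 0$ for $P$-a.e.\ $\omega$ and all $j,x$. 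Consequently $\chi_j^1 = 0$ $Q$-almost surely for every fixed $j$, and a countable union over $j$ yields $\kappa(K_\omega,x,1) = 0$.

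Finally, to upgrade the strict bounds from each fixed $\alpha \in (0,1)$ to all $\alpha \in (0,1)$ simultaneously on a single set of full measure, apply the second paragraph at a countable dense subset of $(0,1)$ and interpolate using the monotonicity of $\chi_0^\alpha$ and $\overline\chi_0^\alpha$ in $\alpha$. The main technical point is arranging both positive-probability events in the second paragraph to be compatible with $X_\emptyset > 0$; in each case this is handled by reserving an independent collection of subtrees whose survival relies on the standard positive survival probability of the Mandelbrot percolation in the supercritical range $p > k^{-m}$.
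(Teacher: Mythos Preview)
Your proof is correct and complete, but it takes a somewhat different route from the paper in two places.

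For the strict bounds on $(0,1)$, the paper invokes Theorem~\ref{meanporosityexists} (which already yields a single full-measure set on which $\kappa(K_\omega,x(\eta),\alpha)=E_Q[\chi_0^\alpha]$ for all $\alpha\notin D$) together with the bare assertion $0<E_Q[\chi_0^\alpha]<1$, and then appeals to monotonicity to cover the countable exceptional set $D$. You instead work directly from the sandwich of Lemma~\ref{holecontinuity}, explicitly construct positive-$P$-probability events witnessing $E_Q[\overline\chi_0^\alpha]>0$ and $E_Q[\chi_0^\alpha]<1$, and then pass from a countable dense set of $\alpha$'s to all of $(0,1)$ by monotonicity. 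Your version is more self-contained (it does not need the continuity machinery of Theorem~\ref{meanporosityexists}) and makes the ``obvious'' inequality $0<E_Q[\chi_0^\alpha]<1$ fully explicit; the paper's version is shorter because that machinery is already in hand.

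For the endpoint $\alpha=1$, the arguments are genuinely different. You observe that $\chi_j^1=1$ forces $x(\eta)\in\partial Q_j^k(x(\eta))$ and then use a first-moment estimate (essentially the content of Lemma~\ref{zero}, which in the paper appears only later) to conclude that $k$-adic hyperplanes carry no $\nu_\omega$-mass. The paper instead argues by contradiction: a $1$-hole at some scale $j$ forces $1$-holes at all finer scales, giving $\kappa(K_\omega,x,\alpha)=1$ for every $\alpha\le 1$, which contradicts the strict upper bound already established in the first part of the corollary. The paper's argument is slicker in that it bootstraps from the first claim and avoids any measure computation; yours is more direct but anticipates Lemma~\ref{zero}.
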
 

\begin{proof}  
Since $0<E_Q[\chi_0^\alpha]<1$ for all $\alpha\in (0,1)$ and the functions
$\alpha\mapsto\underline\kappa(K_\omega,x(\eta),\alpha)$ and 
$\alpha\mapsto\overline\kappa(K_\omega,x(\eta),\alpha)$ are decreasing,
the first claim follows from Theorem~\ref{meanporosityexists}.
The claim $\kappa(K_\omega,x,0)=1$ is obvious. Finally, if  
$\overline\kappa(K_\omega,x,1)>0$, $K_\omega$ has a $1$-hole 
near $x$ at scale $j$ for some $j\in\N$. Hence, $x$ should be on 
the boundary of the hole and $J_{\eta|_j}$ which, in turn, implies 
that $K_\omega$ has a $1$-hole near $x$ at all scales larger than $j$. Thus 
$\kappa(K_\omega,x,\alpha)=1$ for all $\alpha\le 1$ which is a contradiction
with the first claim.
\end{proof}

To study the mean porosities of the natural measure, we need some auxiliary
results. 

\begin{proposition}\label{LLN}
For all $s>0$, the sequence $\{\1_{\{X_j\le s\}}\}_{j\in\N}$
satisfies the strong law of large numbers.
\end{proposition}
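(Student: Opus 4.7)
The plan is to exhibit a hidden iid structure under $Q$ along the spine and apply Birkhoff's ergodic theorem, which gives the SLLN cleanly. Theorem~\ref{HRV} is not immediately applicable here because the natural covariance bounds for $\1_{\{X_j\le s\}}$ do not obviously decay at a summable rate without extra regularity (such as H\"older continuity at $s$ of the $Q$-distribution function of $X_0$), but the ergodic-theoretic route bypasses this difficulty.

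First, using the martingale recursion for $X_\sigma$ and separating the on-spine child, I would define, for each $j\ge 0$, the off-spine contribution
\[
V_{j+1}(\eta,\omega)=\sum_{\tau\in I,\,\tau\ne\eta_{j+1}}k^{-d}X_{\eta|_j*\tau}(\omega)\1_{\{\omega(\eta|_j*\tau)=c\}},
\]
so that $X_j=k^{-d}X_{j+1}+V_{j+1}$ holds $Q$-a.s.\ (the condition $\omega(\eta|_i)=c$ for all $i$ is $Q$-a.s.\ because $\nu_\omega$ is supported on $K_\omega$ by \eqref{nuversusX}). Iterating,
\[
X_j=\sum_{i=1}^{r}k^{-(i-1)d}V_{j+i}+k^{-rd}X_{j+r}.
\]
By \eqref{expectationXzero}, $E_Q[X_{j+r}]=E_P[X_0^2]<\infty$, so $\sum_r k^{-rd}E_Q[X_{j+r}]<\infty$; a Borel--Cantelli argument then gives $k^{-rd}X_{j+r}\to 0$ $Q$-a.s., and a countable union over $j$ yields the representation $X_j=\sum_{i\ge 1}k^{-(i-1)d}V_{j+i}$ $Q$-a.s.\ for every $j\ge 0$.

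The crux is to show that $(V_i)_{i\ge 1}$ is iid under $Q$. For bounded Borel $f_1,\dots,f_n$, formula \eqref{expectationqtop} applied with its level index equal to $n$ gives
\[
E_Q\Bigl[\prod_{i=1}^n f_i(V_i)\Bigr]=E_P\Bigl[\sum_{\sigma\in I^n}k^{-nd}\1_{\{\omega(\sigma)=c\}}X_\sigma\prod_{i=1}^n f_i(V_i(\sigma,\cdot))\Bigr].
\]
Given $\sigma\in I^n$ with $\omega(\sigma)=c$, the variable $V_i(\sigma,\cdot)$ is determined by $\omega$ restricted to the subtrees rooted at the off-spine siblings of $\sigma|_i$ under $\sigma|_{i-1}$; these subtrees are pairwise disjoint across $i$ and disjoint from the descendants of $\sigma$ that carry $X_\sigma$. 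Using $P$-independence of $\omega$ on disjoint subtrees, the identity $E_P[X_\sigma\mid\omega(\sigma)=c]=1$, and the symmetry of $P$ (so that the conditional law of $V_i(\sigma,\cdot)$ does not depend on $\sigma$), a sum-collapse computation in the spirit of Lemma~\ref{independence} (invoking $(k^m p)^n=k^{nd}$) yields
\[
E_Q\Bigl[\prod_{i=1}^n f_i(V_i)\Bigr]=\prod_{i=1}^n E_Q[f_i(V_1)],
\]
establishing both independence and a common marginal for $(V_i)_{i\ge 1}$.

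With this in hand, let $T$ be the shift on $\R^\N$ and set $f(v_1,v_2,\dots)=\1_{\{\sum_{i\ge 1}k^{-(i-1)d}v_i\le s\}}$, a bounded Borel function. The push-forward of $Q$ under $(\eta,\omega)\mapsto(V_i)_{i\ge 1}$ is an iid product, with respect to which $T$ is measure-preserving and ergodic by the Kolmogorov $0$--$1$ law. Since $\1_{\{X_j\le s\}}=f\circ T^j$ on a $Q$-full set by the representation above, Birkhoff's ergodic theorem delivers
\[
\frac{1}{n}\sum_{j=0}^{n-1}\1_{\{X_j\le s\}}\longrightarrow E_Q[\1_{\{X_0\le s\}}]=Q(X_0\le s)\quad Q\text{-a.s.},
\]
which is the claimed SLLN. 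The hardest part will be the factorization in the third paragraph, i.e.\ verifying that the size-biasing built into the definition of $Q$ interacts cleanly with the $P$-independence of $\omega$ on disjoint off-spine subtrees, so that the sum over $\sigma\in I^n$ collapses exactly to the product of marginal $Q$-expectations.
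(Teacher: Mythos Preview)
Your argument is correct and takes a genuinely different route from the paper. The paper applies Theorem~\ref{HRV} directly: using the $Q$-independence of $X_j$ and $X_0-k^{-jd}X_j$ (which is exactly your $\sum_{i=1}^j k^{-(i-1)d}V_i$), it bounds $\Cov(\1_{\{X_0\le s\}},\1_{\{X_j\le s\}})$ above by $Q(X_0\le s)\bigl(Q(s<X_0\le s+k^{-jd/2})+Q(X_0>k^{jd/2})\bigr)$, and then shows the first term is summable by invoking the Dubuc--Seneta theorem that $X_0$ has a continuous $P$-density on $(0,\infty)$, while the second is handled by Markov's inequality and \eqref{expectationXzero}. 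Thus the extra regularity you anticipated is precisely what the paper uses.

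Your approach instead pushes the spine decomposition one step further: rather than merely using that $X_0-k^{-jd}X_j$ is $Q$-independent of $X_j$, you show that the whole off-spine sequence $(V_i)_{i\ge 1}$ is i.i.d.\ under $Q$, so that $(X_j)_{j\ge 0}$ is a factor of a Bernoulli shift and the SLLN is immediate from Birkhoff. This is the size-biased tree viewpoint, and it has two advantages: it avoids any appeal to density continuity, and it actually yields the stronger conclusion that the process $(X_j)_{j\ge 0}$ is stationary and ergodic under $Q$, from which the SLLN for $\1_{\{X_j\le s\}}$ (and indeed for any bounded Borel function of $X_j$) follows at once. The paper's approach, on the other hand, stays within the covariance framework of Theorem~\ref{HRV} that governs all the other SLLN applications in Section~\ref{results}, so no new machinery is introduced. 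Your factorisation computation is sound; the collapse $k^{-nd}(k^mp)^n=1$ via \eqref{dim} is exactly what makes the sum over $\sigma\in I^n$ reduce to the product of marginals.
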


\begin{proof}
Since the sequence $(X_j)_{j\in\N}$ is stationary, we only have to check that the
series \eqref{convergencecovariances} converges with $Y_j=\1_{\{X_j\le s\}}$.
Since $X_j$ and $X_0-k^{-jd}X_j$ are $Q$-independent (compare 
Lemma~\ref{independence} or see the remark before \cite[Lemma 10]{Be2}), 
recalling that $X_j$ 
and $X_0$ have the same distribution, we can make the following estimate
\[
\begin{split}
\Cov&(\1_{\{X_0\le s\}},\1_{\{X_j\le s\}})\\
&=Q(X_0\le s\text{ and }X_j\le s)-Q(X_0\le s)Q(X_j\le s)\\
&\le Q(X_0-k^{-jd}X_j\le s\text{ and }X_j\le s)-Q(X_0\le s)Q(X_j\le s)\\
&=Q(X_j\le s)\bigl(Q(X_0-k^{-jd}X_j\le s)-Q(X_0\le s)\bigr)\\
&=Q(X_0\le s)Q(s<X_0\le s+k^{-jd}X_j)\\
&\le Q(X_0\le s)\bigl(Q(s<X_0\le s+k^{-\frac 12jd})+Q(X_0>k^{\frac 12jd})\bigr).\\
\end{split}
\] 
By a result of Dubuc and Seneta \cite{DS} (see also \cite[Theorem II.5.2]{AN}), 
the distribution of $X_0$ has a continuous $P$-density $q(x)$ on $(0,+\infty)$. 
From formula \eqref{expectationqtop}, we obtain
\[
\begin{split}
Q(s<X_0\le s+k^{-\frac 12jd})&=E_Q\bigl[\1_{\{s<X_0\le s+k^{-\frac 12jd}\}}\bigr]\\
&=E_P\bigl[X_0\1_{\{s< X_0\le s+k^{-\frac 12jd}\}}\bigr]\\
&\le (s+k^{-\frac 12d})P(s< X_0\le s+k^{-\frac 12jd})\\
&\le (s+k^{-\frac 12d})k^{-\frac 12jd} \max\limits_{x\in[s,s+k^{-\frac 12d}]}q(x).\\
\end{split}
\]
Therefore, by Markov's inequality,
\begin{multline*}
\sum_{j=1}^\infty \Cov(\1_{\{X_0\le s\}},\1_{\{X_j\le s\}})\le\\
Q(X_0\le s)\bigl((s+k^{-\frac 12d})\max\limits_{x\in[s,s+k^{-\frac 12d}]}q(x)
  +E_Q(X_0)\bigr)\sum_{j=1}^\infty k^{-\frac 12jd}<\infty.
\end{multline*}
\end{proof}

For all $\alpha\in (0,1)$, $\e,\delta>0$ and $j\in\N$, define a function
$H_j^{\alpha,\e,\delta}\colon I^{\N}\times\Omega\to\{0,1\}$ by setting
$H_j^{\alpha,\e,\delta}(\eta,\omega)=1$, if and only if $\nu_\omega$ has an 
$(\alpha,\epsilon)$-hole at scale $j$ near $x(\eta)$ but $K_\omega$ does not 
have an $(\alpha-\delta)$-hole at scale $j$ near $x(\eta)$.

\begin{lemma}\label{holedif}
Let $\alpha\in (0,1)$. For all $\delta>0$, there exists $\e_0>0$ such that, for
$Q$-almost all $(\eta,\omega)\in I^{\N}\times\Omega$, we have 
\[
\limsup_{n\to\infty}\frac 1n \sum_{j=1}^n H_j^{\alpha,\e,\delta}(\eta,\omega)\le\delta
\]
for all $0<\e\le\e_0$.
\end{lemma}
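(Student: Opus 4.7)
The plan is to reduce $H_j^{\alpha,\e,\delta}$ to a condition on the martingale limits $X_\tau$ and then apply a strong law of large numbers in the spirit of Proposition~\ref{LLN}. For the geometric step, fix $r=r(\delta)\in\N$ with $k^{-r}<\tfrac12\delta$ and assume $x(\eta)\in K_\omega$, which holds $Q$-almost surely. If $H_j^{\alpha,\e,\delta}(\eta,\omega)=1$, the $(\alpha,\e)$-measure hole $B=B_\varrho(z,\tfrac12\alpha k^{-j})$ lies in $J_{\eta|_j}$ with $\nu_\omega(B)\le\e\nu_\omega(J_{\eta|_j})$, while the absence of an $(\alpha-\delta)$-hole in $K_\omega$ forces some $y\in K_\omega\cap B_\varrho(z,\tfrac12(\alpha-\delta)k^{-j})$. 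By the choice of $r$, the level-$(j+r)$ construction cube $J_\tau$ containing $y$ satisfies $J_\tau\subset B$, $\tau\succ\eta|_j$ and $\omega(\tau)=c$. Combining this with \eqref{nuversusX} gives $X_\tau(\omega)\le\e k^{rd}X_{\eta|_j}(\omega)$.

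Since $\e\mapsto H_j^{\alpha,\e,\delta}$ is monotone increasing, it suffices to prove the statement at $\e=\e_0$. Introduce an auxiliary truncation $M>0$ and use the pointwise bound
\[
H_j^{\alpha,\e,\delta}\le\1_{\{X_{\eta|_j}>M\}}+\sum_{\sigma\in I^r}\1_{\{\omega(\eta|_j*\sigma)=c,\,X_{\eta|_j*\sigma}\le s\}}
\]
with $s=\e k^{rd}M$. By Proposition~\ref{LLN}, the Ces\`aro average of the first term converges $Q$-almost surely to $Q(X_0>M)$, which is below $\delta/2$ for $M$ large, by \eqref{expectationXzero}. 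A calculation via \eqref{expectationqtop} and the decomposition $X_{\eta|_j}=\sum_\sigma k^{-rd}X_{\eta|_j*\sigma}\1_{\{\omega(\eta|_j*\sigma)=c\}}$ evaluates the $Q$-expectation of each summand to $k^{-rm}E_P[X_0\1_{\{X_0\le s\}}]+p^r(1-k^{-rm})P(X_0\le s)$, so the total summed over $\sigma\in I^r$ is at most $E_P[X_0\1_{\{X_0\le s\}}]+k^{rd}P(X_0\le s)$. Since $X_0$ has a continuous $P$-density on $(0,\infty)$ by Dubuc--Seneta (as already recalled in the proof of Proposition~\ref{LLN}), this bound tends to $0$ as $s\to 0$, so one can select $\e_0$ making it smaller than $\delta/2$ whenever $s\le\e_0 k^{rd}M$.

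The main technical obstacle is upgrading these expectation bounds to almost sure statements, namely proving the strong law of large numbers for each of the finitely many sequences $\{Y_j^\sigma\}_{j\in\N}:=\{\1_{\{\omega(\eta|_j*\sigma)=c,\,X_{\eta|_j*\sigma}\le s\}}\}_{j\in\N}$, $\sigma\in I^r$. I would invoke Theorem~\ref{HRV}, exactly as in the proof of Proposition~\ref{LLN}: stationarity under $Q$ follows from the expectation computation above, while the covariance estimate $\sum_k|\Cov(Y_0^\sigma,Y_k^\sigma)|<\infty$ requires a case analysis according to whether $\sigma$ agrees with the digits of $\eta$ between positions $j+1$ and $j+r$. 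When the digits disagree, the relevant cubes $J_{\eta|_j*\sigma}$ and $J_{\eta|_{j+k}*\sigma}$ are disjoint once $k$ is sufficiently large, yielding outright independence and vanishing covariance; when they agree, one repeats the argument of Proposition~\ref{LLN} using a decomposition $X_{\eta|_k*\sigma}=k^{-\ell d}\tilde V+\tilde W$ with $\tilde V,\tilde W$ conditionally independent, combined with the continuity of the density of $X_0$ to bound the contribution to the covariance.
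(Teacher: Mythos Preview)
Your geometric reduction and overall architecture match the paper's proof closely: both find a level-$(j+r)$ construction cube inside the $(\alpha,\e)$-hole that meets $K_\omega$ and has small $\nu_\omega$-mass, then truncate $X_{\eta|_j}$ and invoke a strong law of large numbers via Theorem~\ref{HRV}. Your single-case geometric step is in fact slightly cleaner than the paper's two-case split into $J_{\eta|_{j+r}}\subset H$ versus $J_{\eta|_{j+r}}\not\subset H$.

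There is, however, a genuine gap in your expectation estimate. You claim that
\[
E_P[X_0\1_{\{X_0\le s\}}]+k^{rd}P(X_0\le s)\longrightarrow 0\quad\text{as }s\to 0,
\]
appealing to the continuity of the $P$-density of $X_0$ on $(0,\infty)$. But $P(X_0=0)$ is the (positive) extinction probability of the underlying supercritical branching process, so $P(X_0\le s)\to P(X_0=0)>0$ and your bound does \emph{not} tend to zero. The paper avoids this by recording, in its event $A_{j,4}^{s,\e}$, the strict inequality $0<k^{-rd}X_\tau\le\e s$; this is legitimate precisely because the cube $J_\tau$ you produced satisfies $K_\omega\cap J_\tau\ne\emptyset$, whence $X_\tau>0$ almost surely by \cite[Theorem~3.4]{MW}. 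Once you replace your indicator by $\1_{\{0<X_{\eta|_j*\sigma}\le s\}}$, the relevant probability becomes $P(0<X_0\le s)$, which does vanish as $s\to 0$.

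A secondary point: your proposed SLLN for the sequences $Y_j^\sigma$ mixes the on-path and off-path cases, and your ``case analysis according to whether $\sigma$ agrees with the digits of $\eta$'' is a random event in $(\eta,\omega)$, not a property of the fixed $\sigma$, so it does not directly yield a covariance bound for $\Cov_Q(Y_0^\sigma,Y_k^\sigma)$. The paper sidesteps this by treating the on-path cube separately (its event $A_{j,1}^{s,\e}$, handled by Proposition~\ref{LLN}) and the off-path cubes together (its event $A_{j,4}^{s,\e}$, which is genuinely $Q$-independent at gaps $\ge r$ in the sense of Lemma~\ref{independence}). You should either carry out the conditional covariance computation carefully or adopt this on-path/off-path split.
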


\begin{proof}
Fix $0<\delta<\alpha$. Let $r\in\N$ be the smallest integer such that 
$2k^{-r}<\delta$. Let $\e>0$. Assume that $H_j^{\alpha,\e,\delta}(\eta,\omega)=1$ 
and denote by $H$ the $(\alpha,\e)$-hole at scale $j$ near $x(\eta)$. 
Considering the relative positions of $H$ and $J_{\eta|_{j+r}}$, we will argue 
that we arrive at the following possibilities:
\begin{enumerate}
\item If $J_{\eta|_{j+r}}\subset H$, we have 
  $\nu_\omega(J_{\eta|_{j+r}})\le\e\nu_\omega(J_{\eta|_j})$.
\item In the case $J_{\eta|_{j+r}}\not\subset H$, there exists $\tau_j\in I^{j+r}$
  such that $\tau_j\ne\eta|_{j+r}$, $J_{\tau_j}\subset H$, 
  $K_\omega\cap J_{\tau_j}\ne\emptyset$ and 
  $\nu_\omega(J_{\tau_j})\le\e\nu_\omega(J_{\eta|_j})$.
\end{enumerate}
Suppose that (i) is not valid. Since $H_j^{\alpha,\e,\delta}(\eta,\omega)=1$, the 
set $K_\omega$ does not have an $(\alpha-\delta)$-hole at scale $j$ near 
$x(\eta)$. Observe that 
\[
H\setminus\bigcup_{\substack{\sigma\in I^{j+r}\\ J_\sigma\not\subset H}}J_\sigma
\]
contains a cube with side length $(\alpha-\delta) k^{-j}$ since $2k^{-r}<\delta$.
Since $J_{\eta|_{j+r}}\not\subset H$, there exists $\tau_j\in I^{j+r}$ as in (ii).

Next we estimate how often (i) or (ii) may happen for $Q$-typical 
$(\eta,\omega)\in I^\N\times\Omega$. We start by 
considering the case (i). We denote by $A_j^{1,\e}$ the 
event that $\nu_\omega(J_{\eta|_{j+r}})\le\e\nu_\omega(J_{\eta|_j})$, that is,
according to  \eqref{nuversusX},
\begin{multline*}
A_j^{1,\e}=\bigl\{(\eta,\omega)\in I^\N\times\Omega\mid \\
   X_{\eta|_{j+r}}(\omega)\le
	\frac {\e}{1-\e}\sum_{\substack{\tau\in I^{j+r}\\ 
	 \eta|_j\prec\tau,\ \tau\ne\eta|_{j+r},\ \omega(\tau)=c}}X_\tau(\omega)\bigr\}.
\end{multline*}
For all $s>0$, let
\[
\begin{split}
 A_{j,1}^{s,\e}&=\bigl\{(\eta,\omega)\in I^N\times\Omega\mid 
     X_{\eta|_{j+r}}(\omega)\le \frac{\e s}{1-\e}\bigr\}\\ 
\text{ and }A_{j,2}^s&=\bigl\{(\eta,\omega)\in I^N\times\Omega\mid 
   \sum_{\substack{\tau\in I^{j+r}\\\eta|_j\prec\tau,\ \tau\ne\eta|_{j+r},\
   \omega(\tau)=c}}X_\tau(\omega)>s\bigr\}.
 \end{split}
\]

In the case (ii), let 
\begin{multline*}
A_j^{2,\e}=\{(\eta,\omega)\in I^\N\times\Omega\mid\,\exists\tau\in I^{j+r}
  \text{ such that }\tau\succ\eta|_j,\\ 
 \tau\ne\eta|_{j+r}\text{ and }0<k^{-rd}X_\tau(\omega)\le\e  X_{\eta|_j}(\omega)\}.
 \end{multline*}
Recall that, for any $\tau\in I^*$, we have
$P(X_\tau(\omega)>0\mid K_\omega\cap J_\tau\ne\emptyset)=1$ by 
\cite[Theorem 3.4]{MW}. Defining
\[
\begin{split}
 &A^s_{j,3}=\{(\eta,\omega)\in I^\N\times\Omega\mid 
   X_{\eta|_j}(\omega)>s\}\text{ and}\\
 &A_{j,4}^{s,\e}=\{(\eta,\omega)\in I^\N\times\Omega\mid\exists
   \tau\in  I^{j+r}\text{ such that }\tau\succ\eta|_j, \\
 &\phantom{kkkkkkkkkkkppppppppan}\tau\ne\eta|_{j+r}\text{ and }0<k^{-rd}
   X_\tau(\omega)\le\e s\},
 \end{split}
\]
we have
\[
H_j^{\alpha,\e,\delta}\le\1_{A_j^{1,\e}} + \1_{A_j^{2,\e}}
\le\1_{A_{j,1}^{s,\e}}+\1_{A_{j,2}^s}+\1_{A_{j,3}^s}+\1_{A_{j,4}^{s,\e}}.
\]
By Proposition~\ref{LLN}, the functions $\1_{A_{j,1}^{s,\e}}$ and $\1_{A_{j,3}^s}$
satisfy the strong law of large numbers. The same is true for $\1_{A_{j,2}^s}$ and
$\1_{A_{j,4}^{s,\e}}$ by Theorem~\ref{HRV}, since $A_{j,2}^s$ and $A_{i,2}^s$ as 
well as $A_{j,4}^{s,\e}$ and $A_{i,4}^{s,\e}$ are $Q$-independent if $|i-j|\ge r$. 
This can be seen similarly as in the proof of Lemma~\ref{independence}.  Hence,
we obtain the estimate
\begin{equation*}
\limsup_{n\to\infty}\frac 1n \sum_{j=1}^n H_j^{\alpha,\e,\delta}(\eta,\omega)\le
Q(A_{0,1}^{s,\e})+Q(A_{0,2}^s)+Q(A_{0,3}^s)+Q(A_{0,4}^{s,\e})
\end{equation*}
for $Q$-almost all $(\eta,\omega)\in I^{\N}\times\Omega$. Observe that the left
hand side of the above inequality decreases as $\e$ decreases for all 
$(\eta,\omega)\in I^{\N}\times\Omega$. For all large enough $s$, the value of 
$Q(A_{0,2}^s)+Q(A_{0,3}^s)$ is less than $\frac 12\delta$.
Fix such an $0<s<\infty$. According to \eqref{expectationXzero}, we have
$Q(X_r=0)=0$. Therefore, for all $\e$ small enough, we have
$Q(A_{0,1}^{s,\e})+Q(A_{0,4}^{s,\e})<\frac 12\delta$, completing the proof.
\end{proof}

Now we are ready to prove that the mean porosity of the natural measure equals
that of the Mandelbrot percolation set.

\begin{theorem}\label{equal}
For $Q$-almost all $(\eta,\omega)\in I^{\N}\times\Omega$, we have 
\[
\kappa(K_\omega,x(\eta),\alpha)=\kappa(\nu_\omega,x(\eta),\alpha)
\]
for all $\alpha\in (0,1)\setminus D$.
\end{theorem}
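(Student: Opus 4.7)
The plan is to sandwich the mean porosity of $\nu_\omega$ between two copies of that of $K_\omega$. More precisely, I will show that, $Q$-almost surely,
\[
\underline\kappa(K_\omega,x(\eta),\alpha) \le \underline\kappa(\nu_\omega,x(\eta),\alpha) \le \overline\kappa(\nu_\omega,x(\eta),\alpha) \le E_Q[\chi_0^{\alpha-\delta}] + \delta
\]
for every small $\delta>0$, and then let $\delta\to 0$ through values with $\alpha-\delta\notin D$, invoking Theorem~\ref{meanporosityexists} and the defining property of $D$ to collapse the chain to $E_Q[\chi_0^\alpha]=\kappa(K_\omega,x(\eta),\alpha)$.

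For the lower inequality, if $K_\omega$ has an $\alpha$-hole at scale $j$ near $x(\eta)$, the corresponding ball is disjoint from $K_\omega$ and hence has $\nu_\omega$-measure zero. Combining this with \eqref{nuversusX} and $Q(X_l=0)=0$ from \eqref{expectationXzero}, which guarantees $\nu_\omega(Q_j^k(x(\eta)))>0$ for every $j$ $Q$-almost surely, yields $\chi_j^\alpha\le\chi_j^{\alpha,\varepsilon}$ for every $\varepsilon>0$. Dividing by $i$, taking $\liminf$, and letting $\varepsilon\to 0$ gives the lower inequality.

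For the upper inequality, the key pointwise estimate is
\[
\chi_j^{\alpha,\varepsilon} \le \chi_j^{\alpha-\delta} + H_j^{\alpha,\varepsilon,\delta},
\]
valid for every $0<\delta<\alpha$: if $\chi_j^{\alpha,\varepsilon}(\eta,\omega)=1$ but $H_j^{\alpha,\varepsilon,\delta}(\eta,\omega)=0$, then by the very definition of $H_j^{\alpha,\varepsilon,\delta}$ the set $K_\omega$ must have an $(\alpha-\delta)$-hole at scale $j$ near $x(\eta)$. Summing over $j\le i$, dividing by $i$, and applying Lemma~\ref{holedif} with $\varepsilon\le\varepsilon_0(\delta)$ together with Proposition~\ref{exactupper} gives the desired bound after sending $\varepsilon\to 0$.

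Finally I would let $\delta\to 0$ along a sequence with $\alpha-\delta\notin D$, so that continuity of $\beta\mapsto E_Q[\chi_0^\beta]$ at $\beta=\alpha$ (precisely the condition $\alpha\notin D$) collapses the right-hand side to $E_Q[\chi_0^\alpha]$. To obtain a single full-measure event valid for \emph{every} $\alpha\in(0,1)\setminus D$ simultaneously, I would run the argument along a countable dense subset of $(0,1)\setminus D$, intersect the resulting full-measure events, and then extend to arbitrary $\alpha\notin D$ via monotonicity of $\underline\kappa(\nu_\omega,x(\eta),\cdot)$ and $\overline\kappa(\nu_\omega,x(\eta),\cdot)$ in $\alpha$, combined with continuity of $\beta\mapsto E_Q[\chi_0^\beta]$ off $D$, mimicking the final step of the proof of Theorem~\ref{meanporosityexists}. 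The main obstacle is the upper bound: Lemma~\ref{holedif} forces one to pick $\varepsilon$ only after $\delta$ has been fixed, which is exactly why the $\lim_{\varepsilon\to 0}$ is built into the definition of $\underline\kappa(\mu,\cdot,\cdot)$, and why $\alpha$ has to be approached through values outside the countable exceptional set $D$.
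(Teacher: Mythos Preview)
Your proposal is correct and follows essentially the same approach as the paper: the lower bound via $\chi_j^\alpha\le\chi_j^{\alpha,\varepsilon}$, the upper bound via the pointwise inequality $\chi_j^{\alpha,\varepsilon}\le\chi_j^{\alpha-\delta}+H_j^{\alpha,\varepsilon,\delta}$ combined with Lemma~\ref{holedif}, and the quantifier reversal via a countable dense set plus monotonicity. The only cosmetic difference is that the paper phrases the $\delta\to 0$ step as $\overline\kappa(K_\omega,x(\eta),\alpha-\delta)\to\kappa(K_\omega,x(\eta),\alpha)$ using the continuity statement in Theorem~\ref{meanporosityexists}, whereas you pass through $E_Q[\chi_0^{\alpha-\delta}]$ via Proposition~\ref{exactupper} and invoke continuity of $\beta\mapsto E_Q[\chi_0^\beta]$ directly; these are equivalent, and your requirement $\alpha-\delta\notin D$ is harmless but unnecessary since Proposition~\ref{exactupper} holds for all $\alpha\in(0,1)$.
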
 

\begin{proof}
For all $\alpha\in (0,1)$, $\e>0$ and $j\in\N$, let $\chi_j^\alpha$ and 
$\chi_j^{\alpha,\epsilon}$ be as in Definition~\ref{holeindicator} and 
$H_j^{\alpha,\e,\delta}$ as in Lemma~\ref{holedif}. The inequalities
\[
\underline{\kappa}(K_\omega,x(\eta),\alpha)\le
\underline{\kappa}(\nu_\omega,x(\eta),\alpha)\text{ and }
\overline{\kappa}(K_\omega,x(\eta),\alpha)\le
\overline{\kappa}(\nu_\omega,x(\eta),\alpha)
\]
are obvious for all $\alpha\in (0,1)$ and $(\eta,\omega)\in I^{\N}\times\Omega$
since $\chi_j^\alpha\le \chi_j^{\alpha,\epsilon}$. Therefore, for $Q$-almost all 
$(\eta,\omega)\in I^{\N}\times\Omega$, we have 
$\kappa(K_\omega,x(\eta),\alpha)\le\underline\kappa(\nu_\omega,x(\eta),\alpha)$
for all $\alpha\in (0,1)\setminus D$ by Theorem~\ref{meanporosityexists}. 

Let $0<\delta<\alpha$ and $\e>0$. Since 
$\chi_j^{\alpha,\e}\le\chi_j^{\alpha-\delta}+H_j^{\alpha,\e,\delta}$ for all $j\in\N$,
we have, by Lemma~\ref{holedif}, for $Q$-almost all 
$(\eta,\omega)\in I^{\N}\times\Omega$
\[
\begin{split}
\overline{\kappa}(\nu_\omega,x(\eta),\alpha)=&\lim_{\e\to 0}\limsup_{n\to\infty} 
  \frac 1n\sum_{j=1}^n\chi_j^{\alpha,\e}(\eta,\omega)\\
\le&\limsup_{n\to\infty}\frac 1n\sum_{j=1}^n\chi_j^{\alpha-\delta}(\eta,\omega)+
  \lim_{\e\to 0}\limsup_{n\to\infty}\frac 1n\sum_{j=1}^n
  H_j^{\alpha,\e,\delta}(\eta,\omega)\\
\le&\overline{\kappa}(K_\omega,x(\eta),\alpha-\delta)+\delta.
\end{split}
\]
Since $\alpha\mapsto\overline\kappa(K_\omega,x(\eta),\alpha)$ is continuous at
all $\alpha\in (0,1)\setminus D$ by Theorem~\ref{meanporosityexists}, we 
conclude that, for all $\alpha\in (0,1)\setminus D$, we have
for $Q$-almost all $(\eta,\omega)\in I^{\N}\times\Omega$ that
$\overline\kappa(\nu_\omega,x(\eta),\alpha)\le\kappa(K_\omega,x(\eta),\alpha)$.
As in the proof of Theorem~\ref{meanporosityexists}, we see that the order of 
the quantifiers may be reversed.
\end{proof}

Before stating a corollary of the previous theorem, we prove a lemma, which 
is well known, but for which we did not find a reference.

\begin{lemma}\label{zero}
Let $V$ be a coordinate hyperplane and let $e$ be the unit vector perpendicular
to $V$. Then, for all $t\in[0,1]$, 
\[
P\bigl(\nu_\omega(K_\omega\cap (te+V))>0\bigr)=0.
\]
\end{lemma}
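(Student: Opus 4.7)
The plan is to bound $\nu_\omega(K_\omega\cap(te+V))$ from above using the covering representation~\eqref{nudef} at scale $k^{-n}$, and then to show that the $P$-expectation of this bound tends to zero as $n\to\infty$. Without loss of generality, take $V=\{x\in\R^m\mid x_i=0\}$ for some coordinate index $i$, so that the affine hyperplane $te+V$ is $\{x\in\R^m\mid x_i=t\}$.

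For each $n\in\N$, let $T_n(t)=\{\sigma\in I^n\mid J_\sigma\cap(te+V)\ne\emptyset\}$. Since $t$ lies in at most two of the $k^n$ closed $k$-adic intervals of length $k^{-n}$ along the $i$-th coordinate, we have $\card T_n(t)\le 2k^{n(m-1)}$. Applying~\eqref{nudef} to the Borel set $B=K_\omega\cap(te+V)\subset K_\omega$ gives
\[
\nu_\omega(K_\omega\cap(te+V))\le\sum_{\tau\in T_n(t)}(\diam J_\emptyset)^d\,k^{-nd}\,X_\tau(\omega)\,\1_{\{\omega(\tau)=c\}}.
\]
Taking $P$-expectation, I use that $P(\omega(\tau)=c)=p^n$ for $\tau\in I^n$ and that $E_P[X_\tau\mid\omega(\tau)=c]=1$, since conditional on survival of $\tau$ the variable $X_\tau$ is distributed as $X_\emptyset$, which is the limit of the unit-mean branching martingale $N_{j}k^{-jd}$. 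Hence each summand contributes $(\diam J_\emptyset)^d k^{-nd}p^n$. By the identity $k^{-d}p=k^{-m}$ extracted from~\eqref{dim}, this simplifies to $(\diam J_\emptyset)^d k^{-nm}$, so
\[
E_P[\nu_\omega(K_\omega\cap(te+V))]\le 2(\diam J_\emptyset)^d k^{-n}.
\]

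Letting $n\to\infty$ forces $E_P[\nu_\omega(K_\omega\cap(te+V))]=0$, and non-negativity of $\nu_\omega$ yields the claim. There is no genuine obstacle here: the decaying factor $k^{-n}$ is precisely the payoff of one unit of co-dimension, because the hyperplane is met by only $O(k^{n(m-1)})$ cubes while the normalisation built into $d$ exactly absorbs the survival factor $p^n$ against $k^{-nd}$. The only bookkeeping step worth isolating is the conditional mean identity $E_P[X_\tau\mid\omega(\tau)=c]=1$, which is standard Galton--Watson martingale theory for the subtree rooted at $\tau$.
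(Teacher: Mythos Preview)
Your proof is correct and follows essentially the same approach as the paper: bound $\nu_\omega(K_\omega\cap(te+V))$ by the level-$n$ covering sum from \eqref{nudef}, take $P$-expectation using $E_P[X_\tau\mid\omega(\tau)=c]=1$, and show the bound decays to zero because only an exponentially small fraction of level-$n$ cubes meet the hyperplane. The only difference is cosmetic: you compute the decay rate $2(\diam J_\emptyset)^d k^{-n}$ explicitly via the identity $k^{-d}p=k^{-m}$, whereas the paper argues more qualitatively that the restricted sum is an exponentially small proportion of a sum whose expectation equals $1$.
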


\begin{proof}
Fix $t\in [0,1]$. According to \eqref{nudef},
\[
\nu_\omega(K_\omega\cap (te+V))=\lim_{j\to\infty}(\diam J_\emptyset)^d
\sum_{\substack{\tau\in I^j\\J_\tau\cap (te+V)\ne\emptyset}}k^{-jd}X_\tau(\omega)
  \1_{\{\omega(\tau)=c\}},
\]
and the above sequence decreases monotonically as $j$ tends to infinity. Hence, 
\begin{align*}
&E_P[\nu_\omega(K_\omega\cap (te+V))]\\
&\le(\diam J_\emptyset)^d
  \lim_{j\to\infty}E_P\Bigl[\sum_{\substack{\tau\in I^j\\J_\tau\cap (te+V)\ne\emptyset}}
  k^{-jd}X_\tau(\omega)\1_{\{\omega(\tau)=c\}}\Bigr].
\end{align*}
Note that, without the restriction $J_\tau\cap (te+V)\ne\emptyset$, the 
expectation on the right hand side equals 1. Since the restriction 
$J_\tau\cap (te+V)\ne\emptyset$ determines an exponentially decreasing 
proportion of indices as $j$ tends to infinity and since the random variables
$k^{-jd}X_\tau\1_{\{\omega(\tau)=c\}}$ have the same distribution, the limit of the
expectation equals 0.
\end{proof}

Next corollary is the counterpart of Corollary~\ref{positive} for mean 
porosities of the natural measure.

\begin{corollary}\label{mainmeasure}
For $P$-almost all $\omega\in\Omega$ and for $\nu_\omega$-almost all 
$x\in K_\omega$, we have
\[
0<\underline\kappa(\nu_\omega,x,\alpha)
\le \overline\kappa(\nu_\omega,x,\alpha)<1
\] 
for all $\alpha\in (0,1)$, $\kappa(\nu_\omega,x,0)=1$ and 
$\kappa(\nu_\omega,x,1)=0$. 
\end{corollary}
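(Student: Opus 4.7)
The plan is to transfer the bounds from Corollary~\ref{positive} (on $K_\omega$) to $\nu_\omega$ by using Theorem~\ref{equal}, and then handle the countable set $D$ together with the endpoints $\alpha\in\{0,1\}$ by separate elementary arguments. Note that \emph{a.s.}\ statements in the form ``$Q$-almost all $(\eta,\omega)$'' and ``$P$-almost all $\omega$ and $\nu_\omega$-almost all $x\in K_\omega$'' are equivalent via \eqref{Qdef}, so I may freely translate between them.

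For $\alpha\in(0,1)\setminus D$, Theorem~\ref{equal} and Corollary~\ref{positive} combined give, on a set of full $Q$-measure, that $\kappa(\nu_\omega,x,\alpha)=\kappa(K_\omega,x,\alpha)\in(0,1)$ for \emph{every} such $\alpha$ simultaneously. To extend this to $\alpha\in D$, I observe that the count $\widetilde N_i(\nu_\omega,x,\alpha,\e)$ is non-increasing in $\alpha$, because shrinking the radius of an $(\alpha,\e)$-hole (same center) produces a valid $(\alpha',\e)$-hole for $\alpha'<\alpha$; hence both $\underline\kappa(\nu_\omega,x,\cdot)$ and $\overline\kappa(\nu_\omega,x,\cdot)$ are non-increasing. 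Picking sequences $\alpha_i^-\nearrow\alpha$ and $\alpha_i^+\searrow\alpha$ in $(0,1)\setminus D$ (possible since $D$ is countable) and squeezing yields
\[
0<\kappa(\nu_\omega,x,\alpha_1^+)\le\underline\kappa(\nu_\omega,x,\alpha)\le\overline\kappa(\nu_\omega,x,\alpha)\le\kappa(\nu_\omega,x,\alpha_1^-)<1.
\]

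The endpoint $\alpha=0$ is trivial: the ``hole'' $B_\varrho(z,0)$ is empty, so taking $z=x$ shows a $(0,\e)$-hole exists at every scale and every $\e>0$; hence $\kappa(\nu_\omega,x,0)=1$. For $\alpha=1$, any open cube $B_\varrho(z,\tfrac12 k^{-j})$ contained in the half-open cube $Q_j^k(x)$ of equal side length $k^{-j}$ must have $z$ equal to the center of $Q_j^k(x)$, and then $B_\varrho(z,\tfrac12 k^{-j})$ coincides with the interior of $Q_j^k(x)$. By Lemma~\ref{zero}, applied to the countably many $k$-adic coordinate hyperplanes, $\nu_\omega$-a.e.\ $x$ avoids all such hyperplanes; for such $x$ we have $\nu_\omega(\partial Q_j^k(x))=0$ while $\nu_\omega(Q_j^k(x))>0$ for every $j$ because $x\in\spt\nu_\omega$. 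Therefore $\nu_\omega(B_\varrho(z,\tfrac12 k^{-j}))=\nu_\omega(Q_j^k(x))>0$, so for $\e<1$ no $(1,\e)$-hole occurs at any scale, giving $\overline\kappa(\nu_\omega,x,1)=0$.

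The main obstacle is the endpoint $\alpha=1$, where one must rule out full-cube holes: this needs Lemma~\ref{zero} plus a countable intersection over $k$-adic hyperplanes to ensure that $\nu_\omega$ charges neither the boundary nor the interior face of $Q_j^k(x)$ differently from the whole cube, for $\nu_\omega$-typical $x$. The rest of the argument is mechanical, relying only on monotonicity in $\alpha$ and the already established Theorem~\ref{equal}.
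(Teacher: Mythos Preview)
Your proof is correct and follows essentially the same route as the paper's: deduce the $\alpha\in(0,1)$ bounds from Corollary~\ref{positive} and Theorem~\ref{equal} together with monotonicity in $\alpha$, handle $\alpha=0$ trivially, and settle $\alpha=1$ via Lemma~\ref{zero} by observing that a $(1,\varepsilon)$-hole forces $\nu_\omega(\partial Q_j^k(x))>0$. One small wording issue: the conclusion $\nu_\omega(\partial Q_j^k(x))=0$ comes directly from Lemma~\ref{zero} applied to the countably many $k$-adic hyperplanes (a statement about $\nu_\omega$), not from the fact that the particular point $x$ avoids those hyperplanes; the latter is what you need only to guarantee $\nu_\omega(Q_j^k(x))>0$.
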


\begin{proof}
The first claim follows from Corollary~\ref{positive}, Theorem~\ref{equal} and
the monotonicity of the functions 
$\alpha\mapsto\underline\kappa(\nu_\omega,x,\alpha)$ and 
$\alpha\mapsto\overline\kappa(\nu_\omega,x,\alpha)$. Since 
$\underline\kappa(K_\omega,x,0)\le\underline\kappa(\nu_\omega,x,0)$, the second
claim follows from Corollary~\ref{positive}. Note that 
$\chi_j^{1,\epsilon}(\eta,\omega)=1$ only if $\nu_\omega(\partial J_{\eta|_j})>0$.
Therefore, the last claim follows from Lemma~\ref{zero}.  
\end{proof}

The following corollary solves completely Conjecture 3.2 stated in \cite{JJM}.
 
\begin{corollary}\label{infsup}
For $P$-almost all $\omega\in\Omega$ and for $\nu_\omega$-almost all 
$x\in K_\omega$, we have 
\[
\lpor(K_\omega,x)=\lpor(\nu_\omega,x)=0,\quad\upor(K_\omega,x)=\frac 12
\text{ and }\upor(\nu_\omega,x)=1.
\]
\end{corollary}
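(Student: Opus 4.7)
The plan is to deduce all four equalities from the non-triviality of the mean porosities established in Corollaries~\ref{positive} and~\ref{mainmeasure}, namely $0<\underline{\kappa}(\cdot,x,\alpha)\le\overline{\kappa}(\cdot,x,\alpha)<1$ for every $\alpha\in(0,1)$. The bridge between the $k$-adic max-metric setting of Definition~\ref{ourmean} and the Euclidean-ball setting of Definitions~\ref{defporoset} and~\ref{defporomeas} relies on the factor~$\tfrac{1}{2}$ built into the hole radius, which makes $\alpha\to 1$ correspond exactly to porosity $\to\tfrac{1}{2}$ for a set containing~$x$ and to porosity $\to 1$ for a measure.

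For the upper porosities, the bounds $\upor(K_\omega,x)\le\tfrac{1}{2}$ and $\upor(\nu_\omega,x)\le 1$ are immediate from the definitions, using that $x\in K_\omega$ forces the empty ball $B(z,\alpha r)$ to satisfy $|z-x|\ge\alpha r$ and $|z-x|+\alpha r\le r$. For the reverse inequalities I would fix $\alpha<1$ and invoke Corollary~\ref{positive} (resp.~\ref{mainmeasure}) to produce infinitely many scales~$j$ at which $K_\omega$ (resp.\ $\nu_\omega$) has an $\alpha$-hole (resp.\ $(\alpha,\varepsilon)$-hole) $B_\varrho(z,\tfrac{\alpha}{2}k^{-j})\subset Q_j^k(x)$. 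The max-metric ball contains a Euclidean ball of the same radius disjoint from~$K_\omega$ (or of $\nu_\omega$-mass at most $\varepsilon\,\nu_\omega(Q_j^k(x))$), and taking the reference ball $B(x,|z-x|+\tfrac{\alpha}{2}k^{-j})$ yields local porosity at least $\frac{\alpha/2}{|z-x|+\alpha/2\cdot k^{-j}}$. When $\alpha$ is close to~$1$ the hole fills almost all of $Q_j^k(x)$, so $x$ lies in a thin complementary shell and, at $\nu_\omega$-typical scales (where $x$ sits near a face rather than a corner of that shell), $|z-x|$ approaches $\tfrac{\alpha}{2}k^{-j}$ and the porosity ratio approaches~$\tfrac{1}{2}$. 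Letting $\alpha\to 1$, and $\varepsilon\to 0$ in the measure case, yields $\upor(K_\omega,x)\ge\tfrac{1}{2}$ and $\upor(\nu_\omega,x)\ge 1$.

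For the lower porosities I would argue by contradiction. Assume $\lpor(K_\omega,x)\ge\beta>0$; then every sufficiently small scale~$r$ admits an empty max-metric ball of radius~$\beta r$ inside $B_\varrho(x,r)$. Fix $C\in\N$ and restrict to those~$j$ for which $B_\varrho(x,k^{-j})\subset Q_{j-C}^k(x)$, equivalently $\dist_\varrho(x,\partial Q_{j-C}^k(x))\ge k^{-j}$; at each such~$j$ the empty ball becomes an $\alpha$-hole at scale $j-C$ with $\alpha=2\beta k^{-C}$. Since $x$ lies on no coordinate hyperplane by Lemma~\ref{zero}, an ergodic argument for the position of~$x$ within its nested $k$-adic cubes under~$\nu_\omega$ shows that the density of such~$j$ tends to~$1$ as $C\to\infty$. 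Taking $C$ large enough forces this density to exceed $\overline{\kappa}(K_\omega,x,2\beta k^{-C})<1$, whence $\underline{\kappa}(K_\omega,x,2\beta k^{-C})>\overline{\kappa}(K_\omega,x,2\beta k^{-C})$, contradicting $\underline{\kappa}\le\overline{\kappa}$. Hence $\lpor(K_\omega,x)=0$, and the same reasoning with $(\alpha,\varepsilon)$-holes in place of $\alpha$-holes gives $\lpor(\nu_\omega,x)=0$.

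The main obstacle is a quantitative rate comparison in the lower-porosity step: as $C\to\infty$ the density of well-positioned scales and the quantity $\overline{\kappa}(K_\omega,x,2\beta k^{-C})$ both tend to~$1$ (the latter because $\kappa(\cdot,x,0)=1$), and one must verify that the former approaches~$1$ strictly faster. This requires combining an explicit ergodic estimate for the distribution of the position of~$x$ within~$Q_j^k(x)$ with the rate at which $E_Q[\chi_0^\alpha]\to 1$ as $\alpha\to 0$, and is the only point where something more than routine bookkeeping is needed.
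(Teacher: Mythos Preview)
Your treatment of the upper porosities is close to the paper's (which is equally terse): both exploit $\underline\kappa(\cdot,x,\alpha)>0$ for $\alpha$ near $1$ to produce infinitely many scales with large holes. The ``face versus corner'' issue you flag in dimension $m\ge 2$ is real, but it can be handled by the same centring device the paper uses in the lower-porosity part (see below), so this is not a serious obstacle.

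The genuine problem is your lower-porosity argument, which takes a route different from the paper's and whose admitted gap is in fact fatal. The paper does \emph{not} argue by contradiction. For fixed small $\alpha$ and a fixed depth $r$ with $2k^{-r}<\alpha$, it defines an indicator $Y_i^\alpha(\eta,\omega)$ which equals $1$ when (a) the word $\eta|_{i+r}$ ends in a sub-word placing $x(\eta)$ in the central portion of $J_{\eta|_i}$, \emph{and} (b) $K_\omega$ has no $\tfrac{\alpha}{2}$-hole at scale $i$ contained in $J_{\eta|_i}\setminus J_{\eta|_{i+r}}$. Both conditions depend only on the construction between levels $i$ and $i+r$, so the events $\{Y_i^\alpha=1\}$ and $\{Y_j^\alpha=1\}$ are $Q$-independent once $|i-j|\ge r$, and since $E_Q[Y_0^\alpha]>0$ the strong law yields a \emph{positive density} of scales $i$ with $Y_i^\alpha=1$. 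At each such scale $B(x,\tfrac14 k^{-i})\subset J_{\eta|_i}$ and $\por(K_\omega,x,\tfrac14 k^{-i})\le 2\alpha$, so $\lpor=0$ follows by letting $\alpha\downarrow 0$ along a sequence. No rate comparison is needed because nothing is pushed toward density $1$.

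Your contradiction scheme, by contrast, requires $\delta_C>\overline\kappa(K_\omega,x,2\beta k^{-C})$ for some $C$. But $1-\overline\kappa(K_\omega,x,\alpha)=Q(\chi_0^\alpha=0)$ is the $Q$-probability that $K_\omega$ is $\alpha$-dense in $J_\emptyset$; for $\alpha<k^{-n}$ this forces every level-$n$ construction cube to be selected, an event of $P$-probability at most $p^{k^m+k^{2m}+\cdots+k^{nm}}$. Thus $\overline\kappa\to 1$ \emph{doubly exponentially} fast as $C\to\infty$, whereas the density of non-centred scales satisfies $1-\delta_C\asymp k^{-C}$ (under $Q$ the path $\eta|_C$ is uniformly distributed over $I^C$, and the boundary cubes form a fraction $\asymp k^{-C}$). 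Hence $\overline\kappa>\delta_C$ for all large $C$, and the inequality you need goes the wrong way: the contradiction cannot be obtained. The fix is precisely the paper's idea of bundling the centring and the no-hole condition into a single finite-range event and invoking the strong law for that event directly.
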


\begin{proof} 
By Corollary~\ref{positive}, for $Q$-almost all 
$(\eta,\omega)\in I^{\N}\times\Omega$, we have that 
$\overline\kappa(K_\omega,x(\eta),\alpha)<1$ for all $\alpha>0$. Hence, for 
$P$-almost all $\omega\in\Omega$ and for $\nu_\omega$-almost all $x\in K_\omega$, 
there are, for all $\alpha>0$, arbitrarily large $i\in\N$ such that $K_\omega$
does not have an $\alpha$-hole at scale $i$ near $x$ which is contained in
$Q_i^k(x)$. Note that, in Definitions~\ref{defporoset} and \ref{defporomeas}, 
the holes are defined using balls while the mean porosities are defined in 
terms of $k$-adic cubes (see Definitions~\ref{ourmean} and \ref{defmean}).  
Therefore, it is possible that $B(x,k^{-i})$ contains an $\alpha$-hole which is
outside the construction cube $Q_i^k(x)$ if $x$ is 
close to the boundary of $Q_i^k(x)$. We show that there are infinitely many 
$i\in\N$ such that this will not happen. 

Fix $\alpha\in(0,\frac 14)$ and $r>8$ large enough so that $2k^{-r}<\alpha$. 
Let $I'\subset I^r$ be the set of words such that, for all $\tau\in I'$, the 
$\varrho$-distance from all points of $J_\tau$ to the centre of $J_\emptyset$ is 
at most $\frac 14$. For all $i\in\N$, define 
$Y_i^\alpha\colon I^{\N}\times\Omega\to\{0,1\}$ by setting 
$Y_i^\alpha(\eta,\omega)=1$, if and only if $J_{\eta|_i}$ is chosen, $\eta|_{i+r}$ 
ends with a word from $I'$ and $K_\omega$ does not have an $\frac 12\alpha$-hole
at scale $i$ near $x(\eta)$ which is completely inside 
$J_{\eta|_i}\setminus J_{\eta|_{i+r}}$. Note that if $x(\eta)\in K_\omega$ and 
$K_\omega$ has an $\alpha$-hole 
at scale $i$ near $x(\eta)$, then at least half of this hole is in 
$J_{\eta|_i}\setminus J_{\eta|_{i+r}}$. Thus, $K_\omega$ does not have an 
$\alpha$-hole at scale $i$ near $x(\eta)$ if $Y_i^\alpha(\eta,\omega)=1$.
Since for indices $i$ and $j$ with $|i-j|\ge r$, the events 
$\{(\eta,\omega)\in I^{\N}\times\Omega\mid Y_i^\alpha(\eta,\omega)=1\}$ and
$\{(\eta,\omega)\in I^{\N}\times\Omega\mid Y_j^\alpha(\eta,\omega)=1\}$
are $Q$-independent (compare with Lemma~\ref{independence}),
the averages of random variables $Y_i^\alpha(\eta,\omega)$ converge 
to $E_Q(Y_0^\alpha)>0$ for $Q$-almost all $(\eta,\omega)\in I^\N\times\Omega$. If 
$Y_i^\alpha(\eta,\omega)=1$, then $B(x(\eta),\frac 14k^{-i})\subset J_{\eta|_i}$ 
and there is no $z\in B(x(\eta),\frac 14k^{-i})$ such that 
$B(z,\frac 12\alpha k^{-i})\subset B(x(\eta),\frac 14k^{-i})\setminus K_\omega$.
Therefore, $\por(K_\omega,x(\eta),\frac 14k^{-i})\le 2\alpha$. A similar argument 
shows that $\por(\nu_\omega,x(\eta),\frac 14k^{-i})\le 2\alpha$. Let 
$(\alpha_j)_{j\in\N}$ and $(\e_k)_{k\in\N}$ be sequences tending to 0.
For $Q$-almost all $(\eta,\omega)\in I^\N\times\Omega$, we have for all 
$j,k\in\N$ that there are infinitely many scales $i\in\N$ such that
\[
\por(K_\omega,x(\eta),\frac 14k^{-i})<\alpha_j\text{ and } 
 \por(\nu_\omega,x(\eta),\frac 14k^{-i},\varepsilon_k)<\alpha_j.
\]
Thus, we conclude that
\[
\lpor(K_\omega,x)=0=\lpor(\nu_\omega,x)
\]
for $P$-almost all $\omega\in\Omega$ and for $\nu_\omega$-almost all 
$x\in K_\omega$. 

Since 
$\underline{\kappa}(K_\omega,x,\alpha)>0$ and 
$\underline{\kappa}(\nu_\omega,x,\alpha)>0$ for all $\alpha<1$, we deduce that 
\[
\upor(K_\omega,x)=\frac 12\text{ and }\upor(\nu_\omega,x)=1
\]
for $P$-almost all $\omega\in\Omega$ and for $\nu_\omega$-almost all 
$x\in K_\omega$.
\end{proof}

\begin{remark}
Theorems~\ref{meanporosityexists} and \ref{equal} should extend to 
homogeneous random self-similar sets satisfying the random strong open set 
condition.
\end{remark}

\begin{thebibliography}{35}

\bibitem{AN} K. Athreya and P. Ney, \emph{Branching Processes},
  Springer-Verlag, Berlin, 1972.

\bibitem{BS} D. Beliaev and S. Smirnov, \emph{On dimension of porous 
  measures}, Math. Ann. \textbf{323} (2002), 123--141.

\bibitem{BJJKRSS} D. Beliaev, E. J\"arvenp\"a\"a, M. J\"arvenp\"a\"a, 
  A. K\"aenm\"aki, T. Rajala, S. Smirnov and V. Suomala, 
  \emph{Packing dimension of mean porous measures}, J. London Math. Soc.
  \textbf{80} (2009), 514--530.

\bibitem{Be} A. Berlinkov, \emph{On random fractals with infinite 
  branching: Definition, measurability, dimensions}, Ann. Inst. H. Poincar\`e 
  Probab. Statist. \textbf{49} (2013), 1080--1089.

\bibitem{Be2} A. Berlinkov, \emph{Exact packing dimension in random
  recursive constructions}, Prob. Theor. Rel. Fields \textbf{126} (2003), 
  477--496.

\bibitem{CORS} C. Chen, T. Ojala, E. Rossi and V. Suomala,
  \emph{Fractal percolation, porosity and dimension}, J. Theoret. Probab., 
  to appear.

\bibitem{De} A. Denjoy, \emph{Sur une propri\'et\'e des s\'eries
  trigonom\'etriques}, Verlag v.d.G.V. der Wis-en Natuur. Afd., 30 Oct.
  (1920).

\bibitem{Do} E. P. Dol\v zenko, \emph{Boundary properties of arbitrary
  functions}, (in Russian), Izv. Akad. Nauk SSSR Ser. Mat. \textbf{31} (1967),
  3--14.

\bibitem{DS} S. Dubuc and E. Seneta, \emph{The local limit theorem
  for the Galton-Watson process}, Ann. Prob. \textbf{4} (1976), 490--496.

\bibitem{EJJ} J.-P. Eckmann, E. J\"arvenp\"a\"a and M. J\"arvenp\"a\"a,
  \emph{Porosities and dimensions of measures}, Nonlinearity \textbf{13} 
  (2000), 1--18.

\bibitem{FG} K. J. Falconer and G. R. Grimmett, \emph{On the geometry of random 
  Cantor sets and fractal percolation}, J. Theoret. Probab. \textbf{5}
  (1992), 465--485.
   
\bibitem{GMW} S. Graf, R. D. Mauldin and S. Williams,
  \emph{The exact Hausdorff dimension in random recursive constructions},
  Mem. Amer. Math. Soc. \textbf{381} (1988), 1--121.

\bibitem{HRV} T.-C. Hu, A. Rosalsky and A. Volodin,
  \emph{On convergence properties of sums of dependent random variables
  under second moment and covariance restrictions}, Stat. Probab. Letters 
  {\textbf 78} (2008), 1999--2005.

\bibitem{JJ} E. J\"arvenp\"a\"a and M. J\"arvenp\"a\"a,
  \emph{Porous measures on $\R^n$: local structure and dimensional
  properties}, Proc. Amer. Math. Soc. \textbf{130} (2001), 419--426.

\bibitem{JJ2} E. J\"arvenp\"a\"a and M. J\"arvenp\"a\"a,
  \emph{Average homogeneity and dimensions of measures}, Math. Ann. 
  \textbf{331} (2005), 557--576.

\bibitem{JJKRRS} E. J\"arvenp\"a\"a, M. J\"arvenp\"a\"a, 
  A. K\"aenm\"aki, T. Rajala, S. Rogovin and V. Suomala, \emph{Packing 
  dimension and Ahlfors regularity of porous sets in metric spaces}, Math. Z.
  \textbf{266} (2010), 83--105.

\bibitem{JJKS} E. J\"arvenp\"a\"a, M. J\"arvenp\"a\"a, 
  A. K\"aenm\"aki and V. Suomala, \emph{Asymptotically sharp dimension
  estimates for $k$-porous sets}, Math. Scand. \textbf{97} (2005), 309--318.

\bibitem{JJM} E. J\"arvenp\"a\"a, M. J\"arvenp\"a\"a and R. D. Mauldin, 
  \emph{Deterministic and random aspects of porosities}, Discrete Contin. Dyn. 
  Syst. \textbf{8} (2002), 121--136. 

\bibitem{KRS} A. K\"aenm\"aki, T. Rajala and V. Suomala, \emph{Local
  homogeneity and dimensions of measures in doubling metric spaces}, An. Sc.
  Norm. Super. Pisa Cl. Sci., to appear.

\bibitem{KS} A. K\"aenm\"aki and V. Suomala, \emph{Conical upper density 
  theorems and porosity of measures}, Adv. Math. \textbf{217} (2008), 952--966.

\bibitem{KS2} A. K\"aenm\"aki and V. Suomala, \emph{Nonsymmetric conical 
  upper density and $k$-porosity}, Trans. Amer. Math. Soc. \textbf{363} (2011),
  1183--1195.

\bibitem{KP} J.-P. Kahane and J. Peyri\`ere,
  \emph{Sur certaines martingales de Benoit Mandelbrot}, Adv. Math. \textbf{22}
  (1976), 131--145.

\bibitem{KR} P. Koskela and S. Rohde, \emph{Hausdorff dimension and mean
  porosity}, Math. Ann. \textbf{309} (1997), 593--609.

\bibitem{L} R. Lyons, \emph{Strong law of large numbers for weakly correlated
  random variables}, Michigan Math. J. \textbf{35} (1988), 353--359.

\bibitem{Man} B. Mandelbrot, \emph{Intermittent turbulence in self-similar 
  cascades: divergence of high moments and dimension of the carrier}, J. Fluid. 
  Mech. \textbf{62} (1974), 331--358. 

\bibitem{M} P. Mattila, \emph{Distribution of sets and measures along
  planes}, J. London Math. Soc. \textbf{38} (1988), 125--132.

\bibitem{M2} P. Mattila, \emph{Geometry of Sets and Measures in Euclidean 
  spaces}, Cambridge University Press, Cambridge, 1995.

\bibitem{MW} R. D. Mauldin and S. C. Williams, \emph{Random recursive
  constructions: asymptotic geometric and topological properties},
  Trans. Amer. Math. Soc. \textbf{295} (1986), 325--345.

\bibitem{MM} M. E. Mera and M. Mor\'an, \emph{Attainable values for upper 
  porosities of measures}, Real Anal. Exchange \textbf{26}  (2000/01), 
  101--115. 

\bibitem{MMPZ} M. E. Mera, M. Mor\'an, D. Preiss and L. Zaj\'\i\v cek, 
  \emph{Porosity, $\sigma$-porosity and measures}, Nonlinearity \textbf{16}
  (2003), 247--255. 

\bibitem{S} A. Salli, \emph{On the Minkowski dimension of strongly porous
  fractal sets in $\R^n$}, Proc. London Math. Soc. \textbf{62} 
  (1991), 353--372.

\bibitem{Shm} P. Shmerkin, \emph{Porosity, dimension, and local entropies: 
  a survey}, Rev. Un. Mat. Argentina \textbf{52} (2011), 81--103.

\bibitem{Shm2} P. Shmerkin, \emph{The dimension of weakly mean porous 
  measures: a probabilistic approach}, Int. Math. Res. Not. IMRN \textbf{9}
  (2012), 2010--2033.

\bibitem{U} M. Urba\'nski, \emph{Porosity in conformal iterated function
  systems}, J. Number Theory \textbf{88} (2001), 283--312.

\bibitem{Z} L. Zaj\'\i\v cek, \emph{Porosity and $\sigma$-porosity},
  Real Anal. Exchange \textbf{13} (1987-88), 314--350.

\end{thebibliography}

\providecommand{\bysame}{\leavevmode\hbox
  to3em{\hrulefill}\thinspace}
\providecommand{\MR}{\relax\ifhmode\unskip\space\fi MR }
\providecommand{\MRhref}[2]{%
  \href{http://www.ams.org/mathscinet-getitem?mr=#1}{#2} }
\providecommand{\href}[2]{#2}

\end{document}